\documentclass[a4paper, reqno, 12pt]{amsart}
\usepackage[english]{babel}
\usepackage[T1]{fontenc}
\usepackage{mathptmx,amsmath,dsfont}
\usepackage{hyperref}
\usepackage{graphicx}
\usepackage{amssymb,verbatim,cases,bbold}
\usepackage{fullpage}
\usepackage[svgnames,hyperref]{xcolor}
\usepackage{color,soul}
\newtheorem{theorem}{Theorem}[section]
\newtheorem{definition}[theorem]{Definition}
\newtheorem{lemma}[theorem]{Lemma}
\newtheorem{proposition}[theorem]{Proposition}
\newtheorem{corollary}[theorem]{Corollary}
\newtheorem{e-definition}[theorem]{Definition}

\newtheorem{example}[theorem]{Example}

\def\n{\noindent}

\def\n{\noindent}

\begin{document}
\author{Thai Duong Do\textit{$^{1,2}$}, Pham Hoang Hiep\textit{$^{1}$}}
\address{\textit{$^{1}$}Institute for Artificial Intelligence, VNU University of Engineering and Technology, Hanoi, Vietnam.}
\address{\textit{$^{2}$}Department of Mathematics, National University of Singapore, 10 Lower Kent Ridge Road, Singapore, 119076, Singapore.}
\email{dtduong@vnu.edu.vn, dtduong@nus.edu.sg}
\address{\textit{$^{1}$}Institute for Artificial Intelligence, VNU University of Engineering and Technology, Hanoi, Vietnam.}
\email{phhiepvn@vnu.edu.vn}

\title{Comparison principles for Monge-Amp\`ere measures on pluripolar sets}

\subjclass[2000]{32W20, 32U15}
\keywords{Monge-Amp\`ere operator, pluripolar sets}
\date{\today}
\begin{abstract}
In this paper, we introduce a notion of singularity comparison for plurisubharmonic functions based on the Bedford–Taylor capacity. We establish comparison principles for the complex Monge–Amp\`ere operator on pluripolar sets in the Cegrell classes. As applications, we obtain a characterization of this relation via auxiliary functions in the energy class and prove a corresponding uniqueness result for the Monge–Amp\`ere equation. 
\end{abstract}
\maketitle

\tableofcontents
\section{Introduction and main results}

\n
We denote by $PSH(\Omega)$ the class of plurisubharmonic (psh) functions on $\Omega$, and by $PSH^-(\Omega)$ its subclass consisting of negative plurisubharmonic functions. Throughout this article, we always assume that $\Omega$ is a bounded hyperconvex domain. Recall that a domain $\Omega \subset \mathbb{C}^n$, $n \ge 1$, is called \emph{bounded hyperconvex} if it is bounded, connected, and open, and there exists a bounded plurisubharmonic function $\varphi : \Omega \to (-\infty,0)$ such that the sublevel set
\[
\{ z \in \Omega : \varphi(z) < c \}
\]
has compact closure in $\Omega$ for every $c \in (-\infty,0)$.

The Monge-Amp\`ere operator is defined on locally bounded plurisubharmonic functions in the sense of Bedford-Taylor (see \cite{BT76, BT82}). In\cite{De93}, Demailly showed that this operator can also be extended to certain classes of plurisubharmonic functions with isolated or compactly supported singularities.
In \cite{Ce98, Ce04}, Cegrell introduced a class $\mathcal{E}$ of plurisubharmonic functions on which the complex Monge-Amp\`ere operator $(dd^c \cdot)^n$ is well defined in the sense of pluripotential theory. The study of Monge-Amp\`ere measures associated with functions in $\mathcal{E}$, particularly on pluripolar sets, has since become an active area of research in pluripotential theory, with deep connections to singularity theory and complex Monge-Amp\`ere equations.

In this article, we introduce and study a notion of comparison of singularities measured in terms of the Bedford-Taylor capacity. We call this notion \emph{singularity comparison in capacity}, and we formalize it in the following definition.

\n
\begin{definition}\label{def:cap-order}
    Let $\varphi, \psi : \Omega \to [-\infty, +\infty]$ be functions defined on $\Omega$. We say that $\varphi$ is \emph{less singular in capacity} than $\psi$, and write $\varphi \preceq_{\mathrm{Cap}} \psi$ on $\Omega$, if
$$\varliminf_{t \to +\infty} t^n \, \mathrm{Cap}_\Omega\big(\{ z \in D : \varphi(z) < \psi(z) - t \}\big) = 0,$$
for every relatively compact subdomain $D \Subset \Omega$.
We say that $\varphi$ and $\psi$ have the same singularity type in capacity on $\Omega$, and write $\varphi \simeq_{\mathrm{Cap}}\psi$ if both $\varphi \preceq_{\mathrm{Cap}} \psi$ and $\psi \preceq_{\mathrm{Cap}} \varphi$ hold on $\Omega$.
\end{definition}

\n
The relation $\preceq_{\mathrm{Cap}}$ defines a preorder on the class of functions under consideration. Moreover, if $\varphi_1 \preceq_{\mathrm{Cap}} \psi_1$ and $\varphi_2 \preceq_{\mathrm{Cap}} \psi_2$ on $\Omega$, then for all $a_1, a_2 \ge 0$, we have
$$a_1 \varphi_1 + a_2 \varphi_2 \preceq_{\mathrm{Cap}} a_1 \psi_1 + a_2 \psi_2.$$

\n
We note that the singularity order $\preceq_{\mathrm{Cap}}$ introduced in Definition~1.1 is strictly weaker than the local singularity comparison considered in \cite{ACLR25} (see Proposition~\ref{prop:implication} and Example~\ref{strict}).

We begin by establishing a comparison principle for the complex Monge-Amp\`ere operator on pluripolar sets, which extends Lemma 4.1 in \cite{ACCH09}.

\n
\begin{theorem}\label{Comparion_Principle_Pluripolar_Set}
Let $\varphi_1, \ldots, \varphi_n, \psi_1, \ldots, \psi_n \in \mathcal{E}(\Omega)$ be such that $\varphi_i \preceq_{\mathrm{Cap}} \psi_i$ on $\Omega$ for all $1 \leq i \leq n$. Then
$$
\int_E dd^c \varphi_1 \wedge \cdots \wedge dd^c \varphi_n
\leq
\int_E dd^c \psi_1 \wedge \cdots \wedge dd^c \psi_n,
$$
for every pluripolar Borel set $E \subset \Omega$.
\end{theorem}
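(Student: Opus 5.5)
We follow the strategy of Lemma~4.1 in \cite{ACCH09}, replacing the pointwise inequality $\varphi_i\le\psi_i+O(1)$ by the capacity condition. First we localize. Since $E$ is a pluripolar Borel set and both measures are Radon, we may assume $E\Subset D\Subset\Omega$ for some subdomain $D$. By Cegrell's local structure of $\mathcal E$ (functions in $\mathcal E$ coincide near $\bar D$ with functions in $\mathcal F(\Omega)$, and the Monge--Amp\`ere measure on $D$ depends only on the restrictions), we may assume all $\varphi_i,\psi_i\in\mathcal F(\Omega)$.

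The key object is the truncated comparison function $u_{i,t}=\max(\varphi_i,\psi_i-t)$. For $t>0$ we have $u_{i,t}\ge\varphi_i$, so $u_{i,t}\in\mathcal F$. Moreover $u_{i,t}=\psi_i-t$ on the plurifine open set $\{\varphi_i<\psi_i-t\}$ and $u_{i,t}=\varphi_i$ on $\{\varphi_i>\psi_i-t\}$. We then argue in two steps.

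\emph{Step 1: compare $\psi$ with $u_t$ on $E$.} The function $u_{i,t}$ is less singular than $\psi_i-t$ pointwise, and the Monge--Amp\`ere operator on $\mathcal F$ is monotone on pluripolar sets: if $u\ge v$ then $1_E(dd^cu)^n\le 1_E(dd^cv)^n$ for pluripolar $E$, and the mixed version follows from \cite{ACCH09}, Lemma~4.1. Hence
\[
\int_E dd^c u_{1,t}\wedge\cdots\wedge dd^c u_{n,t}\le\int_E dd^c\psi_1\wedge\cdots\wedge dd^c\psi_n .
\]

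\emph{Step 2: show $\int_E \bigwedge dd^c u_{i,t}\to\int_E\bigwedge dd^c\varphi_i$ along a sequence $t_k\to\infty$.} On the plurifine open set $O_t=\bigcap_i\{\varphi_i>\psi_i-t\}$ the two currents agree, by plurifine locality of the Monge--Amp\`ere operator in $\mathcal E$. So it suffices to bound the mass of both measures on $E\cap\bigcup_i\{\varphi_i\le\psi_i-t\}$. Multilinearity and the decomposition $\{\varphi_i\le\psi_i-t\}\subset\{\varphi_i<\psi_i-t/2\}$ reduce this to estimating, for $A_i=\{z\in D:\varphi_i<\psi_i-t/2\}$, the masses $\int_{A_i}\bigwedge_j dd^c w_j$ with $w_j\in\{\varphi_j,u_{j,t}\}$.

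Step 2 is the main obstacle. Here we use the capacity hypothesis: pick $t_k\to\infty$ with $t_k^n\,\mathrm{Cap}_\Omega(A_i)\to0$. The difficulty is that the $w_j$ are unbounded, so we cannot simply bound the measure by $\|w\|_\infty^n\mathrm{Cap}$. We first try truncation. On $A_i$ we split according to whether all $w_j>-s$. On $\{w_j>-s\ \forall j\}$ the measure coincides plurifinely with that of $\max(w_j,-s)$, so its mass on $A_i$ is at most $s^n\mathrm{Cap}_\Omega(A_i)$. The remaining part lies in $\bigcup_j\{w_j\le -s\}$. For $w_j=u_{j,t}\ge\varphi_j$, both pieces are controlled uniformly in $t$ by the standard $\mathcal F$ estimate $\int_{\{\varphi\le -s\}}(dd^c\cdot)^n\to\int_{\{\varphi=-\infty\}}(dd^c\cdot)^n$.

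This is where the argument is delicate: the tail on $\{\varphi=-\infty\}$ does not vanish, and it is exactly the mass on $E$. To handle it, we choose $s$ proportional to $t$, say $s=\varepsilon t$. Then $s^n\mathrm{Cap}(A_i)\to0$ along $t_k$, which is precisely why the exponent $n$ appears in Definition~\ref{def:cap-order}. For the tail, we use that on $\{u_{j,t}\le-\varepsilon t\}\cap O_t^c$ the measure involving $u_{j,t}=\psi_j-t$ is dominated, via Step 1 type monotonicity, by the measure of $\psi$, while on $O_t$ it equals that of $\varphi$. Since $E$ is fixed, this gives
\[
\int_E\bigwedge dd^c\varphi_i\le\liminf_k\int_E\bigwedge dd^cu_{i,t_k}+o(1).
\]
Combining this with Step 1 and letting $\varepsilon\to0$ yields the theorem.
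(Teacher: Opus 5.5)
\textbf{There is a genuine gap in Step~2.}

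Your Step~1 is correct: it is \cite{ACCH09}, Lemma~4.1, applied to $u_{i,t}\ge\psi_i-t$. Your overall reduction also matches the last step of the paper's proof, where $\varphi_i$ is compared with $\max(\varphi_i,\psi_i)$ and monotonicity finishes. But Step~2 is not a technical lemma. The inequality $\int_E\bigwedge dd^c\varphi_i\le\int_E\bigwedge dd^c u_{i,t}$ is exactly the theorem for the pairs $\varphi_i\preceq_{\mathrm{Cap}}u_{i,t}$, and the argument you sketch for it does not work.

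The mass of $\bigwedge_j dd^c w_j$ on a pluripolar set is carried by $\bigcup_j\{w_j=-\infty\}$: on $\bigcap_j\{w_j>-k\}$ the measure coincides with that of the bounded functions $\max(w_j,-k)$, which charges no pluripolar set. Such points lie neither in $O_t$ nor in $\bigcap_j\{w_j>-s\}$. So the plurifine agreement on $O_t$ and the bound $s^n\mathrm{Cap}_\Omega(A_i)$ both concern pieces of the measures that have zero mass on $E$. All the mass you must compare sits in the tail $\bigcup_j\{w_j\le -s\}$, and there your argument goes the wrong way. Dominating the measure of $u_{j,t}=\psi_j-t$ by that of $\psi$ gives an \emph{upper} bound for $\int_E\bigwedge dd^c u_{i,t}$. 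Step~2 needs the opposite: an upper bound for $\int_E\bigwedge dd^c\varphi_i$ in terms of it. The fact $\int_{\{\varphi\le -s\}}(dd^c\varphi)^n\to\int_{\{\varphi=-\infty\}}(dd^c\varphi)^n$ only identifies the tail of a single function; it cannot compare the tails of two different functions, and the claimed uniformity in $t$ is unsupported. The capacity hypothesis only controls Monge--Amp\`ere masses of bounded functions, and nothing in your sketch moves the mass sitting at $-\infty$ up to a finite level.

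The missing idea is a global mass balance that does exactly this. For $\varphi\in\mathcal F(\Omega)$ and $\sup_\Omega\psi<0$, Stokes' theorem gives $\int_{B}(dd^c\varphi)^n=\int_{B}(dd^c\max(\varphi,\psi))^n$ for every Borel set $B\supset\{\varphi\le\psi\}$ (Lemma~3.1). The paper applies this to $v_t=\max((1-\epsilon)\varphi-t,-2t/\epsilon)$ and $u_t=\max((1-\epsilon)\varphi-t,\psi,-2t/\epsilon)$, which coincide on $\{\varphi<-2t/\epsilon\}\cap\{\varphi>\psi-t\}$ (Lemma~3.2):
\begin{itemize}
\item the $v_t$-mass of $\{\varphi<-2t/\epsilon\}$ is at least $(1-\epsilon)^n\int_{\{\varphi<-2t/\epsilon\}}(dd^c\varphi)^n$;
\item the $u_t$-mass of that intersection is at most $\int_{\{\psi<-t\}}(dd^c\psi)^n$;
\item since $v_t$ has oscillation $(2/\epsilon-1)t$, its mass on $\{\varphi\le\psi-t\}$ is at most $(2/\epsilon-1)^nt^n\,\mathrm{Cap}_\Omega(\{\varphi\le\psi-t\})$.
\end{itemize}
Letting $t\to\infty$ along a good sequence then compares the masses on $\{\varphi=-\infty\}$ and $\{\psi=-\infty\}$ (Lemma~3.3). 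Your choice $s=\varepsilon t$ is the right scaling, but it only pays off inside this global identity.

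Because the identity is global, it needs functions in $\mathcal F$ that satisfy the capacity condition on all of $\Omega$. Localizing to $D$ as you do loses this. The paper recovers it by passing to $\varphi_E\in\mathcal F$ with $(dd^c\varphi_E)^n=1_E(dd^c\varphi)^n$, and by controlling the capacity off $\overline D$ with an $\mathcal F^a$ envelope (Lemma~3.4). Finally, the paper obtains mixed measures by polarization from the equality case $\varphi_i\simeq_{\mathrm{Cap}}\max(\varphi_i,\psi_i)$, which avoids estimating mixed tails altogether.
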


Next, we establish a comparison principle for Cegrell’s classes that strengthens Theorem 4.7 in \cite{KH09} and Theorem 3.1 in \cite{ACCH09}.

\begin{theorem}\label{Comparion_Principle1} 
Let $\varphi, \psi \in \mathcal{E}(\Omega)$ be such that either of the following two conditions holds:
\begin{itemize}
    \item[(i)] $\varliminf\limits_{z\to \xi } [ \varphi (z) - \psi (z) ] \geq 0$ for all $\xi\in\partial\Omega$;
    \item[(ii)] $\varphi \in \mathcal{N}(H)$ and $\psi \leq H$ on $\Omega$. 
\end{itemize}
Then for all $w_j \in PSH(\Omega) \cap L^\infty(\Omega)$, $-1 \leq w_j \leq 0$, $j = 1,2,\ldots,n$, we have the following inequality:
\begin{align*}
  \frac{1}{n!} &\int_{\{ \varphi < \psi \}} (\psi - \varphi)^n \ dd^c w_1 \wedge \cdots \wedge dd^c w_n 
+ \int_{\{\varphi = \psi = -\infty\}} (-w_1)(dd^c \max (\varphi,  \psi ) )^n\\
+ &\int_{\{\varphi < \psi\}} (-w_1)(dd^c \psi)^n
\leq \int_{\{\varphi < \psi\}} (-w_1)(dd^c \varphi)^n + \int_{\{\varphi = \psi = -\infty\}} (-w_1)(dd^c \varphi)^n.  
\end{align*}

\end{theorem}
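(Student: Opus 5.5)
The plan is to reduce Theorem~\ref{Comparion_Principle1} to the known Cegrell-class comparison principle, Theorem 4.7 in \cite{KH09} (equivalently Theorem 3.1 in \cite{ACCH09}). I will treat the part of the measures living on $\{\varphi>-\infty\}$ and the part living on the pluripolar set $\{\varphi=\psi=-\infty\}$ separately.

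\textbf{Step 1: the regularized comparison.} For $\varepsilon>0$, set $\psi_\varepsilon=\max(\varphi,\psi-\varepsilon)$. Then $\psi_\varepsilon\in\mathcal{E}(\Omega)$ and $\varphi\le\psi_\varepsilon$. Under (i), $\varliminf_{z\to\xi}(\varphi-(\psi-\varepsilon))\ge\varepsilon>0$, so near $\partial\Omega$ we have $\psi_\varepsilon=\varphi$. Under (ii), $\psi-\varepsilon\le H$, so $\psi_\varepsilon\in\mathcal{N}(H)$ with the same boundary behaviour as $\varphi$. In either case the Khue--Hiep/ACCH inequality applies to the pair $(\varphi,\psi-\varepsilon)$ with weights $w_j$, and gives
\[
\frac1{n!}\int_{\{\varphi<\psi-\varepsilon\}}(\psi-\varepsilon-\varphi)^n\,dd^cw_1\wedge\cdots\wedge dd^cw_n+\int_{\{\varphi<\psi-\varepsilon\}}(-w_1)(dd^c\psi)^n\le\int_{\{\varphi<\psi-\varepsilon\}\cup\{\varphi=\psi=-\infty\}}(-w_1)(dd^c\varphi)^n .
\]
This is exactly the form in which \cite{ACCH09} states it, with the pluripolar term $\{\varphi=\psi=-\infty\}$ on the right. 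The proof goes by approximating with bounded functions $\max(\varphi,-j)$, $\max(\psi,-j)$, applying the Bedford--Taylor comparison on $\{\varphi<\psi-\varepsilon\}$, and passing to the limit using the convergence of the measures $\mathbb 1_{\{\varphi>-j\}}(dd^c\max(\varphi,-j))^n$.

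\textbf{Step 2: the new term $\int_{\{\varphi=\psi=-\infty\}}(-w_1)(dd^c\max(\varphi,\psi))^n$.} This is where Theorem~\ref{Comparion_Principle_Pluripolar_Set} enters. Consider $u=\max(\varphi,\psi-\varepsilon)$. By the maximum principle for Monge--Amp\`ere measures (Demailly's inequality in $\mathcal E$),
\[
(dd^c u)^n\ge \mathbb 1_{\{\varphi\ge\psi-\varepsilon\}}(dd^c\varphi)^n+\mathbb 1_{\{\varphi<\psi-\varepsilon\}}(dd^c\psi)^n .
\]
More precisely, on the pluripolar set $P=\{\varphi=\psi=-\infty\}$ we want to bound $\int_P(-w_1)(dd^c\varphi)^n$ from below. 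Since $\varphi\le u$, we have $\varphi\preceq_{\mathrm{Cap}}u$ trivially, so Theorem~\ref{Comparion_Principle_Pluripolar_Set} (with the weight $-w_1$ obtained by integrating over the level sets $\{-w_1>s\}\cap P$) yields $\int_P(-w_1)(dd^c\varphi)^n\le\int_P(-w_1)(dd^c u)^n$. We need the reverse bookkeeping, however. Write the total mass inequality $\int(-w_1)(dd^cu)^n\le\int(-w_1)(dd^c\varphi)^n$, which comes from the comparison on the boundary (Cegrell's inequality for functions with $u\ge\varphi$ and equal boundary values, or in $\mathcal N(H)$). Then split both sides over $\{\varphi<\psi-\varepsilon\}$, $P$, and the rest. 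On the open-type set $\{\varphi>\psi-\varepsilon\}$ we have $u=\varphi$ locally in the plurifine topology, so the measures agree there. On $\{\varphi<\psi-\varepsilon\}$ we have $(dd^cu)^n=(dd^c\psi)^n$. On $\{\varphi=\psi-\varepsilon\}\setminus P$ the contributions are controlled as in Step 1. This leaves $\int_P(-w_1)(dd^c u)^n$ on the left, which is the pluripolar term.

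\textbf{Step 3: let $\varepsilon\to0$.} The sets $\{\varphi<\psi-\varepsilon\}$ increase to $\{\varphi<\psi\}$, and monotone convergence handles the integrals over them. For the pluripolar term, $\max(\varphi,\psi-\varepsilon)$ and $\max(\varphi,\psi)$ have the same singularity type, since they differ by at most $\varepsilon$. Theorem~\ref{Comparion_Principle_Pluripolar_Set} therefore applies in both directions, so $\int_P(-w_1)(dd^c\max(\varphi,\psi-\varepsilon))^n=\int_P(-w_1)(dd^c\max(\varphi,\psi))^n$ for every $\varepsilon$.

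\textbf{Main obstacle.} I expect the main obstacle to be Step 2: making the splitting rigorous on the set $\{\varphi=\psi-\varepsilon\}$ and on $P$ simultaneously, without double counting $(dd^c\varphi)^n|_P$. I would first try to run the whole argument with bounded truncations $\varphi_j=\max(\varphi,-j)$ and $\psi_j=\max(\psi,-j)$, where all sets are handled by Bedford--Taylor theory. I would then recover the $P$-terms as limits of the masses on $\{\varphi\le -j\}$, using Theorem~\ref{Comparion_Principle_Pluripolar_Set} to identify those limits.
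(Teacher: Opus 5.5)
Your Steps 1 and 2 give two \emph{separate} inequalities with the same right-hand side $R_\varepsilon=\int_{\{\varphi<\psi-\varepsilon\}\cup P}(-w_1)(dd^c\varphi)^n$. Step 1 bounds $\frac1{n!}\int_{\{\varphi<\psi-\varepsilon\}}(\psi-\varepsilon-\varphi)^n\,dd^cw_1\wedge\cdots\wedge dd^cw_n+\int_{\{\varphi<\psi-\varepsilon\}}(-w_1)(dd^c\psi)^n$ by $R_\varepsilon$. Step 2, which splits the plain Cegrell mass inequality, bounds $\int_{\{\varphi<\psi-\varepsilon\}}(-w_1)(dd^c\psi)^n+\int_P(-w_1)(dd^cu)^n$ by $R_\varepsilon$. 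The theorem needs all three left-hand terms against a single copy of $R_\varepsilon$, and these two inequalities cannot be added. This is exactly the ``double counting'' you name as the main obstacle, and the truncation fallback does not resolve it.

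The missing idea is to split not the plain mass inequality but the Xing-type inequality of Proposition 3.1 in \cite{KH09}. Apply it to $\varphi\le u_\varepsilon=\max(\varphi,\psi-\varepsilon)$ on a domain $D$ with $\{\varphi<\psi-\varepsilon\}\Subset D\Subset\Omega$, so that $u_\varepsilon=\varphi$ near $\partial D$:
\[
\frac1{n!}\int_D(u_\varepsilon-\varphi)^n\,dd^cw_1\wedge\cdots\wedge dd^cw_n+\int_D(-w_1)(dd^cu_\varepsilon)^n\le\int_D(-w_1)(dd^c\varphi)^n .
\]
All masses here are finite, so your splitting becomes legitimate:
\begin{itemize}
\item over $\{\varphi>\psi-\varepsilon\}$ the two sides cancel by plurifine locality;
\item on $\{\varphi<\psi-\varepsilon\}$ one has $(dd^cu_\varepsilon)^n=(dd^c\psi)^n$;
\item on $\{\varphi=\psi-\varepsilon>-\infty\}$, Demailly's inequality gives $(dd^cu_\varepsilon)^n\ge(dd^c\varphi)^n$; this, not ``as in Step 1'', is what controls that set;
\item the $P$-part of $(dd^cu_\varepsilon)^n$ stays on the left, and it equals that of $\max(\varphi,\psi)$ by Lemma 4.1 of \cite{ACCH09} applied both ways, so your Step 3 is fine.
\end{itemize}
This gives case (i) in one stroke, makes Step 1 superfluous, and is the paper's proof.

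For case (ii) you cannot split directly in $\mathcal N(H)$. The set $\{\varphi<\psi-\varepsilon\}$ need not be relatively compact, and the masses you want to cancel may be infinite. The paper instead reduces (ii) to (i). Write $u+H\le\varphi$ with $u\in\mathcal N(\Omega)$, and let $u_j$ be the envelope of $u$ from $\Omega\setminus\overline{\Omega_j}$. Replace $\psi$ by $\psi+u_j-\varepsilon$ and apply case (i). Then push the pluripolar term down to $\max(\varphi,\psi)$, using $\max(\varphi,\psi+u_j-\varepsilon)\le\max(\varphi,\psi)$ and Lemma 4.1 of \cite{ACCH09}. Finally let $j\to\infty$ and $\varepsilon\to0$.

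Two claims in your Step 2 are false and should be deleted.
\begin{itemize}
\item The displayed Demailly inequality fails on $P$, because $P\subset\{\varphi\ge\psi-\varepsilon\}$. It holds only on $\{\varphi<\psi-\varepsilon\}$ and on $\{\varphi\ge\psi-\varepsilon\}\cap\{\varphi>-\infty\}$. If it held on $P$, Lemma 4.1 of \cite{ACCH09} would force $\mathbf{1}_P(dd^c\max(\varphi,\psi))^n=\mathbf{1}_P(dd^c\varphi)^n$, and the two pluripolar terms in the theorem would always cancel.
\item $\varphi\le u$ gives $u\preceq_{\mathrm{Cap}}\varphi$, not $\varphi\preceq_{\mathrm{Cap}}u$. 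So Theorem~\ref{Comparion_Principle_Pluripolar_Set} yields $\int_P(-w_1)(dd^cu)^n\le\int_P(-w_1)(dd^c\varphi)^n$, the opposite of what you wrote.
\end{itemize}
For a counterexample to both, take $n=1$, $\varphi=2\log|z|$, $\psi=\log|z|$ on the unit disc. Near $0$ one has $u=\log|z|-\varepsilon$, so $dd^cu$ has mass $1$ at the origin while $dd^c\varphi$ has mass $2$.
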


\n
Combining Theorems \ref{Comparion_Principle_Pluripolar_Set} and \ref{Comparion_Principle1}, we obtain comparison principles for Cegrell’s classes that extend Corollary 3.2 in \cite{ACCH09} and Theorem 3.5 in \cite{ACLR25}.

\begin{theorem}\label{Comparion_Principle2} 
Let $\varphi, \psi \in \mathcal{E}(\Omega)$ be such that $\varphi\preceq_{\mathrm{Cap}}\psi$, and suppose that either of the following two conditions holds:
\begin{itemize}
    \item[(i)] $\varliminf\limits_{z\to \xi } [ \varphi (z) - \psi (z) ] \geq 0$ for all $\xi\in\partial\Omega$; 
    \item[(ii)] $\varphi \in \mathcal{N}(H)$ and $\psi \leq H$ on $\Omega$.  
\end{itemize}
Then for all $w_j \in PSH(\Omega) \cap L^\infty(\Omega)$, $-1 \leq w_j \leq 0$, $j = 1,2,\ldots,n$ we have the following inequality:

$$\frac{1}{n!} \int_{\{ \varphi < \psi \}} (\psi - \varphi)^n \, dd^c w_1 \wedge \cdots \wedge dd^c w_n + \int_{\{\varphi < \psi\}} (-w_1)(dd^c \psi)^n$$
$$\leq \int_{\{\varphi < \psi\}} (-w_1)(dd^c \varphi)^n .$$
\end{theorem}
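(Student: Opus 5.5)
Theorem~\ref{Comparion_Principle2} follows by combining Theorem~\ref{Comparion_Principle1} with Theorem~\ref{Comparion_Principle_Pluripolar_Set}. The plan is to show that the two pluripolar terms over $\{\varphi=\psi=-\infty\}$ in Theorem~\ref{Comparion_Principle1} can be discarded. Under either (i) or (ii), Theorem~\ref{Comparion_Principle1} gives
\[
\frac{1}{n!}\int_{\{\varphi<\psi\}}(\psi-\varphi)^n dd^c w_1\wedge\cdots\wedge dd^c w_n + A + \int_{\{\varphi<\psi\}}(-w_1)(dd^c\psi)^n \le \int_{\{\varphi<\psi\}}(-w_1)(dd^c\varphi)^n + B,
\]
where $A=\int_{E}(-w_1)(dd^c\max(\varphi,\psi))^n$, $B=\int_E(-w_1)(dd^c\varphi)^n$, and $E=\{\varphi=\psi=-\infty\}$. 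It therefore suffices to prove $B\le A$, after which $A$ and $B$ cancel. The case where these quantities are infinite is ruled out below.

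First I would check that $\varphi\preceq_{\mathrm{Cap}}\max(\varphi,\psi)$. This holds because, for $t>0$,
\[
\{z\in D: \varphi<\max(\varphi,\psi)-t\}=\{z\in D:\varphi<\psi-t\},
\]
so the defining $\varliminf$ in Definition~\ref{def:cap-order} is the same for both pairs. Both $\varphi$ and $\max(\varphi,\psi)$ lie in $\mathcal{E}(\Omega)$, since $\mathcal{E}$ is stable under taking maxima. Applying Theorem~\ref{Comparion_Principle_Pluripolar_Set} with $\varphi_i=\varphi$ and $\psi_i=\max(\varphi,\psi)$ then gives
\[
\int_F (dd^c\varphi)^n\le \int_F (dd^c\max(\varphi,\psi))^n
\]
for every pluripolar Borel set $F$.

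The main obstacle is inserting the weight $-w_1$, since Theorem~\ref{Comparion_Principle_Pluripolar_Set} only compares unweighted masses. I would handle this with the layer-cake formula. Since $0\le -w_1\le 1$, write
\[
\int_E(-w_1)\,d\mu=\int_0^1 \mu\big(E\cap\{-w_1>s\}\big)\,ds .
\]
Each set $E\cap\{-w_1>s\}$ is a pluripolar Borel set, so the set-wise inequality applies for every $s$. Integrating in $s$ yields $B\le A$.

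Finiteness is needed so that the cancellation is legitimate. For finiteness, I would first reduce to the case where $\{\varphi<\psi\}$ is relatively compact, or note that Cegrell measures are locally finite. If the total masses are infinite, I would apply the argument with $E$ replaced by $E\cap K_j$ for an exhaustion by compacts $K_j$. In that form the inequality can be rearranged, since $B-A\le 0$ holds on each piece; the subtraction is performed before passing to the limit. If Theorem~\ref{Comparion_Principle1} is proved in a form where all terms are finite, because $w_1$ localizes the masses, this step is routine. Subtracting then gives exactly the stated inequality.
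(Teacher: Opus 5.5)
Your reduction is correct and matches the paper's: $\{\varphi<\max(\varphi,\psi)-t\}=\{\varphi<\psi-t\}$ gives $\varphi\preceq_{\mathrm{Cap}}\max(\varphi,\psi)$. Theorem~\ref{Comparion_Principle_Pluripolar_Set} then gives $\mathbf{1}_E(dd^c\varphi)^n\le\mathbf{1}_E(dd^c\max(\varphi,\psi))^n$ on $E=\{\varphi=\psi=-\infty\}$, which is in fact an equality by Lemma 4.1 of \cite{ACCH09}. The weight $0\le -w_1\le 1$ is harmless.

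\textbf{The gap is the cancellation of $A$ against $B$.} Used as a black box, Theorem~\ref{Comparion_Principle1} contains only the global integrals over $E$, and nothing in the hypotheses makes them finite. First, $(dd^c\varphi)^n$ is only a Radon measure. Second, condition (i) does not localize $E$: common poles may accumulate at $\partial\Omega$, and passing to $\psi-\epsilon$ makes $\{\varphi<\psi-\epsilon\}$ relatively compact but leaves $E$ unchanged. Third, $w_1$ localizes nothing, since $w_1\equiv-1$ is admissible. Concretely, take $n=1$, $\Omega$ the unit disc, $\psi(z)=\sum_k\log\bigl|\frac{z-a_k}{1-\bar a_k z}\bigr|$ with $\sum_k(1-|a_k|)<\infty$, $\varphi=\psi+|z|^2-1$, and $w_1\equiv-1$. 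All hypotheses (with (i)) hold, and $A=B=+\infty$. So Theorem~\ref{Comparion_Principle1} reads $\cdots+\infty\le\cdots+\infty$, from which nothing can be extracted.

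Your exhaustion remedy does not repair this. The inequality $B-A\le0$ on each $E\cap K_j$ is true, but you have no inequality whose pluripolar terms are integrals over $E\cap K_j$ to subtract it from; the statement of Theorem~\ref{Comparion_Principle1} is not of that form. When $B<\infty$ your black-box cancellation is valid as written, but the infinite case is exactly where the argument breaks.

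The missing step is to work inside the proof of Theorem~\ref{Comparion_Principle1} rather than with its statement, which is what the paper means by ``repeating the arguments''.
\begin{itemize}
\item In case (i), before letting $\epsilon\to0$, that proof gives an inequality over $\{\varphi<\psi-\epsilon\}$ whose pluripolar terms are $\int_{D\cap E}(-w_1)(dd^c\max(\varphi,\psi))^n$ on the left and $\int_{D\cap E}(-w_1)(dd^c\varphi)^n$ on the right, with $D\Subset\Omega$. These are finite, and by your comparison the left one dominates the right one, so both can be dropped. Monotone convergence as $\epsilon\searrow0$ then gives the claim.
\item In case (ii), the same cancellation is done on the pluripolar terms localized to $\Omega_{j+1}$, before letting $j\to\infty$ and $\epsilon\to0$.
\end{itemize}
With this modification, your argument becomes the paper's proof.
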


\begin{theorem}\label{Comparion_Principle} 
Let $\varphi, \psi \in \mathcal{E}(\Omega)$ be such that $\varphi\preceq_{\mathrm{Cap}}\psi$, $(dd^c\varphi )^n \leq (dd^c\psi )^n$ on $\{ \varphi <\psi \}$ and suppose that either of the following two conditions holds:
\begin{itemize}
    \item[(i)] $\varliminf\limits_{z\to \xi } [ \varphi (z) - \psi (z) ] \geq 0$ for all $\xi\in\partial\Omega$;
    \item[(ii)] $\varphi \in \mathcal{N}(H)$, $\psi \leq H$ on $\Omega$.
\end{itemize}
Then $\varphi\geq\psi$.
\end{theorem}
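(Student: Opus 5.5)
We derive Theorem \ref{Comparion_Principle} from Theorem \ref{Comparion_Principle2}, using a standard test-function argument to force the set $\{\varphi<\psi\}$ to be negligible.

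\textbf{Step 1: apply Theorem \ref{Comparion_Principle2}.} The hypotheses $\varphi\preceq_{\mathrm{Cap}}\psi$ and (i) or (ii) are exactly those of that theorem. Hence for every $w_1,\dots,w_n\in PSH(\Omega)\cap L^\infty(\Omega)$ with $-1\le w_j\le 0$ we get
$$\frac{1}{n!}\int_{\{\varphi<\psi\}}(\psi-\varphi)^n\,dd^cw_1\wedge\cdots\wedge dd^cw_n\le \int_{\{\varphi<\psi\}}(-w_1)\big[(dd^c\varphi)^n-(dd^c\psi)^n\big].$$
By assumption $(dd^c\varphi)^n\le(dd^c\psi)^n$ on $\{\varphi<\psi\}$, and $-w_1\ge 0$. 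So the right-hand side is $\le 0$.

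Some care is needed because both measures may charge pluripolar sets and have infinite total mass. To handle this, we use the inequality in the rearranged form given by Theorem \ref{Comparion_Principle2}, where $\int(-w_1)(dd^c\psi)^n$ sits on the left. If $\int_{\{\varphi<\psi\}}(-w_1)(dd^c\varphi)^n=+\infty$, the conclusion is trivial for that choice. Otherwise the hypothesis gives $\int(-w_1)(dd^c\psi)^n\ge\int(-w_1)(dd^c\varphi)^n$ on that set, and subtracting the finite quantity yields
$$\int_{\{\varphi<\psi\}}(\psi-\varphi)^n\,dd^cw_1\wedge\cdots\wedge dd^cw_n=0.$$
To avoid the infinite case, we may take $w_1$ with compact support (e.g.\ $w_1=\max(\rho,-1)$-type functions vanishing near $\partial\Omega$); then finiteness holds locally, since measures of functions in $\mathcal E$ are locally finite.

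\textbf{Step 2: choose the test functions.} We take all $w_j$ equal to a strictly psh bounded function, such as $w=\max(\|z\|^2/R^2-1,\ -1)$ adjusted so that $-1\le w\le0$, or a suitable exhaustion normalized on a compact set. Then $(dd^cw)^n\ge c\,dV$ on a chosen open set $U\Subset\Omega$. It follows that $(\psi-\varphi)^n=0$ Lebesgue-a.e.\ on $\{\varphi<\psi\}\cap U$. Since $\psi-\varphi>0$ there, $\{\varphi<\psi\}$ has Lebesgue measure zero in $U$, and exhausting $\Omega$ gives measure zero in $\Omega$.

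\textbf{Step 3: conclude.} Plurisubharmonic functions are determined by their values almost everywhere, since $u(z)=\lim_{r\to0}$ of its averages over balls. From $\varphi\ge\psi$ a.e.\ we therefore get $\varphi\ge\psi$ everywhere.

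The main obstacle is Step 1: justifying the subtraction of possibly infinite pluripolar masses. This is exactly where $\varphi\preceq_{\mathrm{Cap}}\psi$ has already been used, through Theorem \ref{Comparion_Principle_Pluripolar_Set}, to remove the $\{\varphi=\psi=-\infty\}$ terms. The remaining issue is choosing $w_1$ so that $\int_{\{\varphi<\psi\}}(-w_1)(dd^c\varphi)^n<\infty$. For $\varphi\in\mathcal E$ this is not automatic near $\partial\Omega$, so I would first try $w_1$ supported in a compact subset, or alternatively argue directly from the inequality without subtraction.
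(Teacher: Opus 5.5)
Your strategy is the paper's: apply Theorem~\ref{Comparion_Principle2}, cancel the Monge--Amp\`ere terms using $(dd^c\varphi)^n\le(dd^c\psi)^n$ on $\{\varphi<\psi\}$, and read off that $\{\varphi<\psi\}$ is negligible. Your Steps 2 and 3 are correct and more explicit than the paper's one-line conclusion: you take $w=|z|^2/R^2-1$ with $\overline\Omega\subset B(0,R)$, whose Monge--Amp\`ere measure is a positive multiple of Lebesgue measure, and then pass from a.e.\ to everywhere through ball averages.

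However, the cancellation in Step 1 is a genuine gap, and the remedy you propose does not work. A function $w_1\in PSH(\Omega)$ with $w_1\le 0$ that vanishes on a nonempty open subset of the domain $\Omega$ is identically zero by the maximum principle. So no nontrivial admissible $w_1$ ``vanishes near $\partial\Omega$''. Having merely $-w_1\to 0$ at $\partial\Omega$ is not enough either. For $\varphi\in\mathcal E$ the measure $(dd^c\varphi)^n$ need not integrate $1-|z|^2$ on the ball: take $\varphi=\sum_k\lambda_k g(\cdot,a_k)$, a sum of pluricomplex Green functions with $|a_k|\to 1$, $\sum_k\lambda_k(1-|a_k|)<\infty$ but $\sum_k\lambda_k^n(1-|a_k|)=\infty$, $n\ge 2$. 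So when $\{\varphi<\psi\}$ reaches $\partial\Omega$, both sides of the inequality from Theorem~\ref{Comparion_Principle2} can equal $+\infty$, and nothing follows. No argument ``without subtraction'' gets around this.

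The missing idea is to arrange relative compactness before cancelling.

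\emph{Case (i).} Apply Theorem~\ref{Comparion_Principle2} to the pair $(\varphi,\psi-\epsilon)$ with $\epsilon>0$. All hypotheses persist: $\varphi\preceq_{\mathrm{Cap}}\psi-\epsilon$ since $\psi-\epsilon\le\psi$, $(dd^c(\psi-\epsilon))^n=(dd^c\psi)^n$, and $\{\varphi<\psi-\epsilon\}\subset\{\varphi<\psi\}$. The boundary condition now gives $\{\varphi<\psi-\epsilon\}\Subset\Omega$. This set has finite $(dd^c\varphi)^n$-mass because that measure is Radon. The subtraction is then legitimate, your Steps 2--3 yield $\varphi\ge\psi-\epsilon$, and letting $\epsilon\to 0$ finishes.

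\emph{Case (ii).} There is no boundary control here, so even $\{\varphi<\psi-\epsilon\}$ may reach $\partial\Omega$, and you cannot run Step 1 through Theorem~\ref{Comparion_Principle2}(ii) directly. The paper instead reduces to case (i).
\begin{itemize}
\item Write $u+H\le\varphi\le H$ with $u\in\mathcal N(\Omega)$.
\item Let $u_j$ be the envelopes from the definition of $\mathcal N$, so $u_j=u$ on $\Omega\setminus\overline{\Omega_j}$ and $u_j$ increases to $0$ off a pluripolar set.
\item Then $\varphi\ge u+H\ge\psi+u_j$ near $\partial\Omega$, and $\varphi\preceq_{\mathrm{Cap}}\psi+u_j$.
\item Also $(dd^c\varphi)^n\le(dd^c\psi)^n\le(dd^c(\psi+u_j))^n$ on $\{\varphi<\psi+u_j\}\subset\{\varphi<\psi\}$.
\item Case (i) therefore gives $\varphi\ge\psi+u_j$. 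Letting $j\to\infty$ gives $\varphi\ge\psi$ outside a pluripolar set, hence everywhere.
\end{itemize}
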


\begin{theorem}\label{Comparion_Principle_Equal_Corollary} 
Let $\varphi, \psi \in \mathcal{E}(\Omega)$ be such that 
$$(dd^c \varphi)^n = (dd^c \psi)^n.$$
Assume that for every $z \in \Omega$, there exists a neighborhood $U$ of $z$ such that either $\varphi \preceq_{\mathrm{Cap}} \psi$ or $\psi \preceq_{\mathrm{Cap}} \varphi$ on $U$. Suppose, in addition, that one of the following conditions holds:
\begin{enumerate}
    \item[(i)] $\displaystyle \liminf_{z \to \xi} \bigl[\varphi(z) - \psi(z)\bigr] = 0$ for all $\xi \in \partial \Omega$;
    \item[(ii)] $\varphi, \psi \in \mathcal{N}(H)$.
\end{enumerate}
Then $\varphi = \psi$.
\end{theorem}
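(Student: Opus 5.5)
The plan is to show that the two singular parts of the Monge--Amp\`ere measures agree, $1_{\{\varphi=-\infty\}}(dd^c\varphi)^n = 1_{\{\psi=-\infty\}}(dd^c\psi)^n$ with the same mass on every pluripolar set, and then to apply Theorem~\ref{Comparion_Principle1} symmetrically to get $\varphi\ge\psi$ and $\psi\ge\varphi$.

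\emph{Step 1: localize Theorem~\ref{Comparion_Principle_Pluripolar_Set}.} Fix $z\in\Omega$ and a neighborhood $U$ as in the hypothesis; after shrinking, take $U$ to be a small ball, which is hyperconvex. Restrictions of functions in $\mathcal{E}(\Omega)$ lie in $\mathcal{E}(U)$, and the Monge--Amp\`ere measure is local. So if $\varphi\preceq_{\mathrm{Cap}}\psi$ on $U$, then Theorem~\ref{Comparion_Principle_Pluripolar_Set} with $\varphi_i=\varphi$, $\psi_i=\psi$ gives
$$\int_E(dd^c\varphi)^n\le\int_E(dd^c\psi)^n$$
for every pluripolar Borel $E\subset U$. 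The hypothesis $(dd^c\varphi)^n=(dd^c\psi)^n$ tells us more: the two measures coincide, so this is trivially an equality. What we actually need is a pluripolar comparison involving $\max(\varphi,\psi)$. Since $\max(\varphi,\psi)\ge\varphi$, we have $\max(\varphi,\psi)\preceq_{\mathrm{Cap}}\varphi$ trivially. If $\varphi\preceq_{\mathrm{Cap}}\psi$ on $U$, then also $\varphi\preceq_{\mathrm{Cap}}\max(\varphi,\psi)$ on $U$, because $\{\varphi<\max(\varphi,\psi)-t\}=\{\varphi<\psi-t\}$. Hence $\varphi\simeq_{\mathrm{Cap}}\max(\varphi,\psi)$ on $U$. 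Applying Theorem~\ref{Comparion_Principle_Pluripolar_Set} in both directions, $(dd^c\max(\varphi,\psi))^n$ and $(dd^c\varphi)^n$ agree on pluripolar subsets of $U$. In the other case, $\psi\preceq_{\mathrm{Cap}}\varphi$, the same argument gives agreement with $(dd^c\psi)^n$, which equals $(dd^c\varphi)^n$. Covering $\Omega$ by countably many such $U$ and using a Borel partition of a pluripolar set $E$, we get
$$\int_E (dd^c\max(\varphi,\psi))^n=\int_E(dd^c\varphi)^n$$
for every pluripolar Borel $E\subset\Omega$, in particular for $E=\{\varphi=\psi=-\infty\}$.

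\emph{Step 2: apply Theorem~\ref{Comparion_Principle1}.} Under (i), condition (i) of Theorem~\ref{Comparion_Principle1} holds for $(\varphi,\psi)$. Under (ii), $\varphi\in\mathcal{N}(H)$ and $\psi\le H$, since $\psi\in\mathcal{N}(H)$. Take $w_1=\dots=w_n=w$ with $w$ strictly psh, bounded, $-1\le w\le0$, e.g.\ $w=\epsilon(|z|^2-R)$. The two pluripolar terms cancel by Step 1. The terms on $\{\varphi<\psi\}$ involving $(dd^c\varphi)^n$ and $(dd^c\psi)^n$ also cancel because the measures are equal, provided all integrals are finite. This leaves
$$\frac1{n!}\int_{\{\varphi<\psi\}}(\psi-\varphi)^n(dd^cw)^n\le 0 .$$
Since $(dd^cw)^n$ is a positive multiple of Lebesgue measure, $\{\varphi<\psi\}$ has Lebesgue measure zero, so $\varphi\ge\psi$ a.e. and therefore everywhere, as both are psh. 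By symmetry (the hypotheses of the theorem are symmetric in $\varphi,\psi$), $\psi\ge\varphi$, hence $\varphi=\psi$.

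\emph{Main obstacle.} The delicate point is finiteness in Step 2. One cannot subtract $\int_{\{\varphi<\psi\}}(-w)(dd^c\psi)^n$ unless it is finite, and for $\mathcal{E}$ it need only be locally finite. I would first rearrange Theorem~\ref{Comparion_Principle1} so that no subtraction of possibly infinite quantities occurs. Failing that, I would replace $w$ by $w$ times a cutoff vanishing near $\partial\Omega$, e.g.\ $w=\max(\epsilon(|z|^2-R),\,A\rho)$ with $\rho$ an exhaustion function. Such a $w$ vanishes near the boundary and is strictly psh on a given compact set, and the Monge--Amp\`ere measures of $\mathcal{E}$ functions have finite mass on compacts, so all terms are finite. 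Letting the compact exhaust $\Omega$ then concludes the argument.
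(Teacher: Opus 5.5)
Your Step 1 matches the paper's first step. The paper gets ``$\le$'' from Lemma~4.1 of \cite{ACCH09} and ``$\ge$'' from Theorem~\ref{Comparion_Principle_Pluripolar_Set} on each $U$; using that theorem in both directions is equivalent. Your Step 2 is what the paper means by ``repeating the arguments from the proof of Theorem~\ref{Comparion_Principle}''. The gap is the finiteness issue you flag, and your proposed repair does not close it.

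Used as a black box on $\Omega$, Theorem~\ref{Comparion_Principle1} gives $A+B+C\le C+B$ in $[0,+\infty]$. This says nothing when $B+C=+\infty$. For $\varphi\in\mathcal{E}(\Omega)$, both $\int_{\{\varphi<\psi\}}(-w)(dd^c\varphi)^n$ and the pluripolar term over $\{\varphi=\psi=-\infty\}$ can be infinite, because nothing prevents these sets from reaching $\partial\Omega$ before the theorem is proved. No choice of weight fixes this:
\begin{itemize}
\item A psh $w\le 0$ that vanishes on a nonempty open set is $\equiv 0$ by the maximum principle. So $\max(\epsilon(|z|^2-R),A\rho)$ does not vanish near $\partial\Omega$; it equals $A\rho$ there.
\item Worse, take the unit ball with $n\ge 2$ and the bounded function $u=-(1-|z|^2)^{\alpha}$ with $0<\alpha<1-1/n$. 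It lies in $\mathcal{E}$, and near the sphere $(dd^cu)^n\approx c\,(1-|z|^2)^{n\alpha-n-1}\,dV$. Every admissible $w\not\equiv 0$ satisfies $-w\ge c'(1-|z|)$ by the Hopf lemma, so $\int(-w)(dd^cu)^n=+\infty$ for all such $w$.
\end{itemize}

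The missing idea is to localize the \emph{set}, not the weight, by shifting $\psi$ down. Take $\psi'=\psi-\epsilon$ in case (i), and $\psi'=\psi+u_j-\epsilon$ in case (ii) with $u_j$ as in the proof of Theorem~\ref{Comparion_Principle}. Then $\{\varphi<\psi'\}\Subset\Omega$: in case (i) by the boundary condition, in case (ii) because $\varphi\ge\psi+u_j$ off $\overline{\Omega_j}$.

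Run the estimate of Theorem~\ref{Comparion_Principle1} for $(\varphi,\psi')$ on a domain $D$ with $\overline{\{\varphi<\psi'\}}\subset D\Subset\Omega$, as in its proof, so the pluripolar terms live on $D\cap\{\varphi=\psi'=-\infty\}$. All terms are then finite, since Monge--Amp\`ere measures of functions in $\mathcal{E}$ are Radon. Because $\psi'\le\psi$, Lemma~4.1 of \cite{ACCH09} together with your Step 1 gives, on pluripolar sets,
\[(dd^c\max(\varphi,\psi'))^n\ge(dd^c\max(\varphi,\psi))^n=(dd^c\varphi)^n\ge(dd^c\max(\varphi,\psi'))^n,\]
so the pluripolar terms cancel. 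Also $(dd^c\psi')^n\ge(dd^c\psi)^n=(dd^c\varphi)^n$, and the remaining inequality forces $\varphi\ge\psi'$. Then let $j\to\infty$ and $\epsilon\to0$.

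Your appeal to symmetry also needs care in case (i). The reverse inequality requires $\liminf_{z\to\xi}[\psi-\varphi]\ge 0$, and this does not follow from $\liminf_{z\to\xi}[\varphi-\psi]=0$. Already for $n=1$, $\varphi=0$ and $\psi=-\frac{1-|z|^2}{|1-z|^2}$ on the disc satisfy the literal hypotheses, yet $\varphi\ne\psi$. So (i) has to be read as $\lim_{z\to\xi}[\varphi-\psi]=0$, which the paper's symmetric argument also tacitly assumes.
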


Finally, we characterize the singularity comparison in capacity.
\begin{theorem}\label{Characterization}
Let $\varphi \in \mathcal{E}(\Omega)$ and $\psi \in PSH^{-}(\Omega)$. Then the following statements are equivalent:
\begin{enumerate}
    \item[(i)] There exists a function $u \in \mathcal{E}^a(\Omega)$ such that
 $$\varphi \geq \psi + u;$$
    \item[(ii)]  $\varphi \preceq_{\mathrm{Cap}} \psi$.
\end{enumerate}
\end{theorem}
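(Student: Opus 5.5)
For the implication (i)$\Rightarrow$(ii), I would first reduce to a statement about $u$ alone. If $\varphi\ge \psi+u$, then for every $D\Subset\Omega$ we have
\[
\{z\in D:\varphi<\psi-t\}\subset\{z\in D: u<-t\}.
\]
It therefore suffices to show $\varliminf_{t\to\infty} t^n\,\mathrm{Cap}_\Omega(\{u<-t\})=0$ for $u\in\mathcal{E}^a(\Omega)$. We may restrict to $u\in\mathcal{E}^a\cap\mathcal{F}$, since the condition is local on $D$ and $u$ can be replaced by a function of $\mathcal{F}^a$ that coincides with it near $\overline D$.

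The tool here is the standard capacity estimate, valid for $u\in\mathcal{F}$ (Cegrell):
\[
t^n\,\mathrm{Cap}_\Omega(\{u<-2t\})\le C\int_{\{u<-t\}}(dd^c u)^n .
\]
This comes from the comparison principle applied to $\max(u,-t)$ and $t\,w$, with $-1\le w\le 0$. As $t\to\infty$ the right-hand side tends to $C\int_{\{u=-\infty\}}(dd^cu)^n$, which is $0$ because $u\in\mathcal{E}^a$ puts no mass on pluripolar sets. This gives (ii), in fact with a genuine limit.

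For (ii)$\Rightarrow$(i), I would take $h=\max(\varphi-\psi,\,\cdot)$ in spirit and build $u$ as an envelope. Fix an exhaustion $D_j\Subset\Omega$ and times $t_j\uparrow\infty$ chosen so that
\[
t_j^n\,\mathrm{Cap}_\Omega(\{z\in D_j:\varphi<\psi-t_j\})\le 2^{-j}.
\]
Let $E_j=\{z\in D_j:\varphi<\psi-t_j\}$ and let $h_j=h^*_{E_j,\Omega}$ be the relative extremal function. Then $\int(dd^ch_j)^n=\mathrm{Cap}(E_j)$ (up to the usual outer-capacity and quasicontinuity adjustments). Setting
\[
u_0=\sum_j 2t_j\,h_j
\]
and using $\big(\int(dd^c\sum a_jh_j)^n\big)^{1/n}\le\sum_j a_j\,\mathrm{Cap}(E_j)^{1/n}$, together with choosing the $t_j$ so that $\sum_j t_j\,\mathrm{Cap}(E_j)^{1/n}<\infty$ (achievable by passing to a subsequence), I would get $u_0\in\mathcal{F}$ with finite total mass.

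I would then aim to show directly that $\varphi\ge\psi+u_0-C$ locally. On $E_j\setminus E_{j+1}$ we have $h_j=-1$, which gives $\varphi\ge\psi-t_{j+1}\ge\psi+\ldots$. This needs the $t_j$ to grow geometrically; the structure is closer to the layered construction in the singularity-comparison papers.

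The main obstacle is guaranteeing $u\in\mathcal{E}^a$, i.e. that $(dd^cu)^n$ charges no pluripolar set, and not merely $u\in\mathcal{F}$. To handle it, I would show $\int_{\{u<-s\}}(dd^cu)^n\to0$, using the tail sum $\sum_{j\ge k}$ and the fact that $\{u<-s\}$ requires large tail contributions. Alternatively, I would replace $u_0$ by $\max(u_0,\ldots)$-type approximants in $\mathcal{E}_0$ and invoke Theorem \ref{Comparion_Principle_Pluripolar_Set} to compare Monge–Ampère masses on $\{u=-\infty\}$.

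The boundary behavior, namely that $\varphi\ge\psi+u$ holds globally and not only on the $D_j$, is handled using $\varphi\in\mathcal{E}$, so $\varphi\ge$ some $\mathcal{F}$ function near $\partial\Omega$ locally. Adding a suitable element $\tilde\varphi\in\mathcal{F}$ with $\tilde\varphi\le\varphi$ to $u$ when needed, while keeping the $\mathcal{E}^a$ property, is the delicate point that I would check carefully.
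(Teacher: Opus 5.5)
Your argument for (i)$\Rightarrow$(ii) is correct and is essentially the paper's. You localize to $u_D=\sup\{v\in PSH^-(\Omega): v\le u \text{ on } D\}$, which lies in $\mathcal{F}^a(\Omega)$ by Lemma 4.1 of \cite{ACCH09}. Your estimate $t^n\mathrm{Cap}_\Omega(\{u_D<-2t\})\le\int_{\{u_D<-t\}}(dd^cu_D)^n$ is exactly the proof of Proposition 3.4 in \cite{CKZ05}.

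The converse, however, has a genuine gap. Your layer estimate uses only $h_j=-1$ on $E_j$. So on $E_j\setminus E_{j+1}$ it gives $\psi+u_0\le\psi-2t_j$, while all you know is $\varphi\ge\psi-t_{j+1}$; hence it needs $t_{j+1}\le 2t_j$. This conflicts with the other demands on the $t_j$.

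First, (ii) is only a $\varliminf$, so a priori it supplies a possibly very sparse sequence of good times. That the $\varliminf$ is actually a limit follows a posteriori from your own first part, but that is precisely what has to be proved. A construction of $u$ from level sets needs control at all scales, which the hypothesis does not give.

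Second, even when the limit is genuine, summability of $\sum_j t_j\mathrm{Cap}_\Omega(E_j)^{1/n}$ and $t_{j+1}\le 2t_j$ are incompatible in general. On the unit ball take $\psi=0$ and $\varphi$ radial, equal to $-(-\log|z|)(\log(-\log|z|))^{-1/n}$ near $0$ and modified away from $0$ so that $\varphi\in\mathcal{F}^a(\mathbb{B})$; then (i) holds with $u=\varphi$. Here $E_j=\{|z|<e^{-R_j}\}$ with $t_j=R_j(\log R_j)^{-1/n}$, $t_j^n\mathrm{Cap}_{\mathbb{B}}(E_j)=1/\log R_j$, and $h_j=\max(\log|z|/R_j,-1)$. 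Your choice $t_j^n\mathrm{Cap}_{\mathbb{B}}(E_j)\le 2^{-j}$ forces $\log R_j\ge 2^j$, hence $t_{j+1}/t_j\to\infty$. Conversely, $t_{j+1}\le 2t_j$ forces $\sum_j t_j/R_j=\infty$, and therefore $u_0\equiv-\infty$.

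Third, the $\mathcal{E}^a$ property and the passage from local to global are left open. A function $\tilde\varphi\in\mathcal{F}(\Omega)$ with $\tilde\varphi\le\varphi$ exists only if $\varphi\in\mathcal{F}(\Omega)$, and even then it could carry pluripolar mass.

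What you are missing is the paper's decision not to build $u$ from level sets at all. The paper takes the envelope $u=\sup\{v\in PSH^-(\Omega): v\le\varphi-\psi\}$. This satisfies $\varphi\ge\psi+u$ and $u\ge\varphi$ (so $u\in\mathcal{E}(\Omega)$) for free, and one only has to show that $(dd^cu)^n$ charges no pluripolar subset of a given $W\Subset\Omega$.

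For $W\Subset D\Subset\Omega$, set $\varphi_D=\sup\{v\in PSH^-(\Omega): v\le\varphi \text{ on } D\}$. Then $\varphi_D\preceq_{\mathrm{Cap}}\max(\varphi_D,\psi)$, so Theorem~\ref{Comparion_Principle_Pluripolar_Set} shows the two functions have the same Monge--Amp\`ere mass on every pluripolar set. This step needs only the $\varliminf$, because Lemmas~\ref{Lemma2} and~\ref{Lemma3} pass to the limit along a single sequence $t_k$.

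Next, Corollary 4.15 in \cite{ACCH09} splits $\varphi_D$ into a component carrying the pluripolar mass and a component $\varphi_2\in\mathcal{F}^a(\Omega)$. Combined with the uniqueness Theorem~\ref{Comparion_Principle_Equal_Corollary} applied to the two pluripolar components, this gives $\varphi_D\ge\psi+\varphi_2$.

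Gluing with $\varphi_{\Omega\setminus\overline W}$, which is maximal on $W$, yields $u\ge\varphi_2+\varphi_{\Omega\setminus\overline W}$. Lemmas 4.1 and 4.4 of \cite{ACCH09} then show that $u$ has no pluripolar mass on $W$. In this route the upgrade from $\varliminf$ to $\lim$ is a consequence, not an input.
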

\n
As a direct consequence, we obtain the following.
\begin{corollary}
    Let $\psi \in PSH^{-}(\Omega)$. Then $\psi + u\simeq_{\mathrm{Cap}} \psi$ for every $u \in \mathcal{E}^a(\Omega).$
\end{corollary}
The paper is organized as follows. In Section 2, we recall the necessary definitions and background material on the Cegrell classes and the Bedford-Taylor capacity. We also discuss the relationship between the capacity-based singularity order introduced in Definition~\ref{def:cap-order} and the local singularity comparison of \cite{ACLR25}, showing in particular that the former is strictly weaker. Section 3 is devoted to the proofs of the main results: we establish the key technical lemmas (Lemmas 3.1--3.4) and then prove Theorems 1.2--1.7.
\section{Preliminaries and Comparison of Singularity Notions}
First, we recall the definitions of the Cegrell classes and the Bedford–Taylor Monge–Amp\`ere capacity. For further details and related results, we refer the reader to \cite{A07,ACCH09,Be06,B06,Ce98,Ce04,Ce08,DD20}.
\begin{definition}
    We say that a bounded plurisubharmonic function $\varphi$ on $\Omega$ belongs to the class $\mathcal E_0 (\Omega )$ if
\[
\lim_{z \to \xi} \varphi(z) = 0 \quad \text{for every } \xi \in \partial \Omega,
\]
and
\[
\int_{\Omega} (dd^c \varphi)^n < +\infty.
\]
\end{definition}
\begin{definition}
    Let $\mathcal F(\Omega)$ denote the class of plurisubharmonic functions $\varphi$ on $\Omega$ such that, there exists a decreasing sequence $\{ \varphi_j \}_{j=1}^\infty \subset\mathcal E_0 (\Omega )$ satisfying $\varphi_j \searrow \varphi$ pointwise on $\Omega$ as $j \to \infty$, and
\[
\sup_j \int_{\Omega} (dd^c \varphi_j)^n < +\infty.
\]
\end{definition}
\begin{definition}
  Let $\mathcal E(\Omega)$ denote the class of plurisubharmonic functions $\varphi$ on $\Omega$ such that, for every $z_0 \in \Omega$, there exists a neighborhood $\omega \subset \Omega$ of $z_0$ and a decreasing sequence $\{ \varphi_j \}_{j=1}^\infty \subset\mathcal E_0 (\Omega )$ satisfying $\varphi_j \searrow \varphi$ pointwise on $\omega$ as $j \to \infty$, and
\[
\sup_j \int_{\Omega} (dd^c \varphi_j)^n < +\infty.
\]  
\end{definition}

\n
A fundamental sequence $[\Omega_j]$ is an increasing sequence of strictly pseudoconvex domains in $\Omega$ such that
\[
\Omega_j \Subset \Omega_{j+1}, \qquad \bigcup_{j=1}^\infty \Omega_j = \Omega.
\]

\begin{definition}
 Let $\mathcal{K} \subset\mathcal E (\Omega )$. We define
$$\mathcal{K}^a = \left\{ \varphi \in \mathcal{K} : \int_E (dd^c \varphi)^n = 0 \ \text{for every pluripolar Borel set } E \subset \Omega \right\}.$$   
\end{definition}
\begin{definition}
    Let $u \in PSH(\Omega)$ with $u \le 0$, and let $[\Omega_j]$ be a fundamental sequence. Define
\[
u_j = \sup \{ \varphi \in \mathrm{PSH}(\Omega) : \varphi \le u \ \text{on } \Omega \setminus \overline { \Omega_j }\}.
\]
\end{definition}

\n
Then $u_j \in PSH(\Omega)$ and $u_j = u$ on $\Omega \setminus \overline{ \Omega_j }$. Moreover, $\{u_j\}$ is an increasing sequence, and hence $\lim_{j \to \infty} u_j$ exists quasi-everywhere on $\Omega$. We define
\[
\tilde{u} = \left( \lim_{j \to \infty} u_j \right)^*,
\]
which is plurisubharmonic on $\Omega$.

\n
Set
\[
\mathcal N (\Omega ) = \{ u \in \mathcal E (\Omega ) : \tilde{u} = 0 \}.
\]
Then $\mathcal N (\Omega )$ is a convex cone, and it consists precisely of those functions in $\mathcal E(\Omega )$ whose smallest maximal plurisubharmonic majorant is identically zero.

\begin{definition}
   Let $\mathcal K \in \{\mathcal E_0, \mathcal F, \mathcal N\}$. A plurisubharmonic function $u$ on $\Omega$ belongs to the class $\mathcal K(\Omega,H)$, where $H \in \mathcal E$, if there exists $\varphi \in \mathcal K$ such that
\[
H \ge u \ge H + \varphi.
\] 
\end{definition}
\begin{definition}
   The Bedford-Taylor Monge-Amp\`ere capacity (see \cite{BT82}) on $\Omega$ is defined for every Borel set $E \subset \Omega$ by
\[
\mathrm{Cap}_{\Omega} (E) = \sup \left\{ \int_E (dd^c u)^n : u \in \mathrm{PSH}(\Omega),\ -1 \le u \le 0 \right\}.
\] 
\end{definition}

Next, we recall the notion of singularity comparison introduced in \cite{ACLR25}, which we will refer to as the \textit{local singularity comparison} in order to distinguish it from our notion.
 
\begin{definition}[{\cite[Definition~3.1]{ACLR25}}]
Let $\varphi, \psi \in \mathcal{E}(\Omega)$. We say that $\varphi$ is
\emph{more singular than} $\psi$ on $\Omega$, written $\varphi \preceq \psi$,
if for every compact set $K \Subset \Omega$ there exists a constant
$C_{K}$ such that
\[
  \varphi \leq \psi + C_{K} \quad \text{on } K.
\]
We say that $u$ and $v$ have the \emph{same singularities},
written $u \simeq v$, if both $\varphi \preceq \psi$
and $\psi \preceq \varphi$.
\end{definition}
 
We prove that the singularity comparison in capacity is weaker than the local singularity comparison considered in \cite{ACLR25}. Roughly speaking, if $\varphi$ is more singular than $\psi$, then $\psi$ is less singular than $\varphi$ in capacity on $\Omega$.
\begin{proposition}\label{prop:implication}
Let $\varphi, \psi \in \mathcal{E}(\Omega)$. Then
\[
  \varphi \preceq \psi
  \implies
  \psi \preceq_{\mathrm{Cap}} \varphi.
\]
In particular, if $\phi \simeq \psi$ then $\phi \simeq_{\mathrm{cap}} \psi$.
\end{proposition}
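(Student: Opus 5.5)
We prove the implication directly from the definitions. The point is that local boundedness of $\varphi-\psi$ from above makes the relevant sublevel sets of $\psi-\varphi$ empty for large $t$, so the capacity condition in Definition~\ref{def:cap-order} holds trivially.

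Assume $\varphi \preceq \psi$ and fix a relatively compact subdomain $D \Subset \Omega$. Then $K := \overline{D}$ is a compact subset of $\Omega$. By the definition of $\varphi\preceq\psi$ there is a constant $C_K$, which we may take $\ge 0$, such that $\varphi \le \psi + C_K$ on $K$. We must show
$$\varliminf_{t\to+\infty} t^n\,\mathrm{Cap}_\Omega\big(\{z\in D:\ \psi(z)<\varphi(z)-t\}\big)=0 .$$

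Take $t > C_K$ and a point $z\in D$. If $\psi(z)=-\infty$, then $\varphi(z)\le \psi(z)+C_K=-\infty$, so $\varphi(z)-t=-\infty$ and the strict inequality $\psi(z)<\varphi(z)-t$ fails. If $\psi(z)>-\infty$, then $\varphi(z)-t\le \psi(z)+C_K-t<\psi(z)$, so again the inequality fails. Hence the set $\{z\in D:\psi(z)<\varphi(z)-t\}$ is empty for every $t>C_K$. Since $\mathrm{Cap}_\Omega(\emptyset)=0$, we get $t^n\,\mathrm{Cap}_\Omega(\cdot)=0$ for all such $t$, and the $\varliminf$ (indeed the limit) equals $0$. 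As $D$ was arbitrary, $\psi\preceq_{\mathrm{Cap}}\varphi$.

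For the final claim, suppose $\varphi\simeq\psi$, i.e. $\varphi\preceq\psi$ and $\psi\preceq\varphi$. Applying the implication to each relation gives $\psi\preceq_{\mathrm{Cap}}\varphi$ and $\varphi\preceq_{\mathrm{Cap}}\psi$, which together mean $\varphi\simeq_{\mathrm{Cap}}\psi$.

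The only point requiring care is the convention on the set $\{-\infty=-\infty\}$. A strict inequality between $-\infty$ and $-\infty$ is false, so the polar locus contributes nothing. There is no real obstacle in this argument.
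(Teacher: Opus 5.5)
Your proof is correct and follows the paper's argument exactly: the bound $\varphi \le \psi + C_K$ on $K=\overline{D}$ makes $\{z\in D:\psi<\varphi-t\}$ empty for $t>C_K$, so the capacity term vanishes for all large $t$. Your choice $K=\overline{D}$ (the paper applies the compact-set definition directly to the subdomain) and your explicit check of the points where $\psi=-\infty$ are minor tidy-ups of the same proof.
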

\begin{proof}
Suppose $\varphi \preceq \psi$, and let
$K \Subset \Omega$ be a relatively compact subdomain.
By assumption there exists $C_{K} > 0$ such that
$\varphi \leq \psi + C_{K}$ on $K$.
Hence $\{\psi < \varphi - t\} \cap K = \emptyset$ for all $t > C_{K}$,
which gives
\[
  \mathrm{Cap}_{\Omega}\bigl(\{\psi < \varphi - t\} \cap K\bigr) = 0
  \quad \text{for all } t > C_{K}.
\]
In particular,
\[
  \varliminf_{t \to +\infty}
  t^{n}\,\mathrm{Cap}_{\Omega}\bigl(\{\psi < \varphi - t\} \cap K\bigr) = 0,
\]
so $\psi \preceq_{\mathrm{Cap}} \varphi$ by Definition~\ref{def:cap-order}.
\end{proof}
 We conclude this section with an example illustrating that the singularity comparison in capacity is strictly weaker than the local singularity comparison.
\begin{example}\label{strict}
Let $\mathbb{B} \subset \mathbb{C}^{n}$ be the unit ball with $n \geq 2$, $0<a<1$,
and set
\[
  \psi(z) = -(-\log|z|)^a, \qquad \varphi(z) = 0.
\]
Then $\varphi,\psi\in\mathcal{E}(\mathbb{B})$ and $\psi \preceq_{\mathrm{Cap}} \varphi$ but $\varphi \not\preceq \psi$. In particular, $\varphi \simeq_{\mathrm{Cap}} \psi$ but $\varphi \not\simeq \psi$.
\end{example}

\begin{proof}
We first observe that
\[
\lim_{t \to +\infty}
t^{n}\,\mathrm{Cap}_{\mathbb{B}}\bigl(\{\psi < - t\} \bigr)
= \lim_{t \to +\infty}\frac{t^n}{t^{n/a}} = 0.
\]
Hence, it follows from Proposition 3.4 in \cite{CKZ05} that $\psi\in\mathcal{F}^a(\mathbb{B})\subset\mathcal{E}(\mathbb{B})$.

We have $\varphi \preceq\psi$ and $\varphi\preceq_{\mathrm{Cap}}\psi$ since $\psi\leq\varphi$ on $\mathbb{B}$.

To show that $\psi \preceq_{\mathrm{Cap}} \varphi$, let $K\Subset \mathbb{B}$ be an arbitrary compact set. Then
\[
\varliminf_{t \to +\infty}
t^{n}\,\mathrm{Cap}_{\mathbb{B}}\bigl(\{\psi < \varphi - t\} \cap K\bigr)
\leq \varliminf_{t \to +\infty}
t^{n}\,\mathrm{Cap}_{\mathbb{B}}\bigl(\{\psi < \varphi - t\}\bigr)
= \varliminf_{t \to +\infty}\frac{t^n}{t^{n/a}} = 0.
\]
Therefore, $\psi \preceq_{\mathrm{Cap}} \varphi$.

To show that $\varphi \not\preceq \psi$, let $K \Subset \mathbb{B}$ be a compact set with $0 \in K$. If $\varphi \preceq \psi$, then there exists a constant $C_K$ such that
\[
-1 \leq -(-\log|z|)^a + C_{K},
\]
which is impossible as $|z| \to 0$. Hence, $\varphi \not\preceq \psi$.
\end{proof}
\section{Proof of main results}

First, we need the following lemma.
\begin{lemma}\label{Lemma1}
Let $\varphi \in \mathcal{F}(\Omega)$ and $\psi \in PSH(\Omega)$ satisfy
$$
\sup_{\Omega} \psi < 0.
$$
Then the following hold:
\begin{itemize}
    \item[(i)] $$
\int_{\Omega} (dd^c \varphi)^n = \int_{\Omega} (dd^c \max(\varphi, \psi))^n.
$$
\item[(ii)] For every Borel set $E \subset \Omega$ such that $\{\varphi \leq \psi\} \subset E$,
$$
\int_E (dd^c \varphi)^n = \int_E (dd^c \max(\varphi, \psi))^n.
$$
\end{itemize}
\end{lemma}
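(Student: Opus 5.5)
Write $u=\max(\varphi,\psi)$ and set $\delta=-\sup_\Omega\psi>0$. Since $u\ge\varphi$ and $\varphi\in\mathcal F(\Omega)$, we have $u\in\mathcal F(\Omega)$. The plan is to prove (ii) first, by comparing the measures on the open set where $\varphi>\psi$ and using a total-mass identity on the rest. Statement (i) is then the special case $E=\Omega$, obtained directly from the same mass identity.

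\emph{Step 1: total masses agree.} We must show $\int_\Omega(dd^c\varphi)^n=\int_\Omega(dd^c u)^n$. The inequality $\ge$ is the standard monotonicity of total mass in $\mathcal F$ (Cegrell): $\varphi\le u$ with both in $\mathcal F$ implies $\int(dd^c u)^n\le\int(dd^c\varphi)^n$.

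For the reverse inequality I would use that $u$ and $\varphi$ agree near $\partial\Omega$ in a capacity sense. Since $\psi\le-\delta$, we have $\{\varphi<\psi\}\subset\{\varphi<-\delta\}$, which has small capacity near $\partial\Omega$ because $\varphi\in\mathcal F$.

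Concretely, I would approximate by $\varphi_j\in\mathcal E_0$ with $\varphi_j\searrow\varphi$. Then $u_j=\max(\varphi_j,\psi)\searrow u$, and $u_j=\varphi_j$ outside the compact set $K_j=\{\varphi_j\le-\delta\}\Subset\Omega$. Stokes' theorem (or the classical fact that two bounded psh functions coinciding near the boundary have equal total Monge–Amp\`ere mass) then gives
\[
\int_\Omega(dd^c u_j)^n=\int_\Omega(dd^c\varphi_j)^n .
\]
Passing to the limit uses Cegrell's result that total masses converge along decreasing sequences in $\mathcal F$, since $(dd^c\varphi_j)^n\to(dd^c\varphi)^n$ weakly with masses converging. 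I expect this limiting step to be the main technical point, and I would quote the convergence theorem for $\mathcal F$ rather than reprove it. This proves (i).

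\emph{Step 2: agreement on $\{\varphi>\psi\}$.} On the plurifine open set $O=\{\varphi>\psi\}$ we have $u=\varphi$. The Monge–Amp\`ere operator in $\mathcal E$ is local in the plurifine topology (Bedford–Taylor for bounded functions, extended to $\mathcal E$ by Cegrell and Khue–Hiep). Hence
\[
1_O(dd^c u)^n=1_O(dd^c\varphi)^n .
\]

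\emph{Step 3: conclusion.} If $\{\varphi\le\psi\}\subset E$, then $\Omega\setminus E\subset O$. Steps 1 and 2 together give
\[
\int_E(dd^c\varphi)^n=\int_\Omega(dd^c\varphi)^n-\int_{\Omega\setminus E}(dd^c\varphi)^n
=\int_\Omega(dd^c u)^n-\int_{\Omega\setminus E}(dd^c u)^n=\int_E(dd^c u)^n .
\]
All these quantities are finite since $\varphi,u\in\mathcal F$, so the subtraction is legitimate.
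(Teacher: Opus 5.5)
Your proof is correct and takes essentially the same route as the paper. Both approximate $\varphi$ by $\varphi_j\in\mathcal E_0$ and use Stokes to equate the total masses of $\varphi_j$ and $\max(\varphi_j,\psi)$, which coincide near $\partial\Omega$ because $\psi\le-\delta$; both then pass to the limit and use plurifine locality on $\{\varphi>\psi\}$ (the paper cites Theorem 4.1 of \cite{KH09}) to obtain (ii) from the total-mass identity (i).
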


\begin{proof}
(i) By Theorem 2.1 in \cite{Ce04}, there exists a decreasing sequence $\{\varphi_j\} \subset \mathcal{E}_0(\Omega)$ such that $\varphi_j \searrow \varphi$ on $\Omega$. We have $\varphi_j = \max (\varphi_j, \psi )$ on $\Omega \setminus \{ \varphi_j < \psi \}$ and $\{ \varphi_j < \psi \}\Subset\Omega$.
It thus follows from Stokes’ theorem that
$$
\int_{\Omega} (dd^c \varphi_j)^n = \int_{\Omega} (dd^c \max(\varphi_j,\psi))^n.
$$
Passing to the limit as $j \to \infty$ yields
$$
\int_{\Omega} (dd^c \varphi)^n = \int_{\Omega} (dd^c \max(\varphi,\psi))^n.
$$

(ii) By Theorem 4.1 in \cite{KH09},
$$
(dd^c \varphi)^n = (dd^c \max(\varphi,\psi))^n \quad \text{on } \{\varphi > \psi\}.
$$
Hence,
$$
(dd^c \varphi)^n - (dd^c \max(\varphi,\psi))^n
$$
is supported in $\{\varphi \leq \psi\}$. Using (i), we obtain
$$
\int_{\{\varphi \leq \psi\}} (dd^c \varphi)^n = \int_{\{\varphi \leq \psi\}} (dd^c \max(\varphi,\psi))^n.
$$
The conclusion follows for every Borel set $E \supset \{\varphi \leq \psi\}$.
\end{proof}

Second, we need the following lemma, which is independently interesting.
\begin{lemma}\label{Lemma2}
Let $\varphi, \psi \in \mathcal{F}(\Omega)$. Then
$$(1- \epsilon )^n \int_{ \{ \varphi <- \frac { 2t } { \epsilon } \} } (dd^c \varphi )^n \leq \int_{ \{ \psi <  - t \} } (dd^c \psi )^n +  \Big( \frac { 2 } { \epsilon } - 1 \Big)^n t^n  \mathrm{Cap}_\Omega\Big(\{ \varphi \leq \psi - t\}\Big) ,$$
for all $0<\epsilon\leq 1$, $t\geq 0$.
\end{lemma}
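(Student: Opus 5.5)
The natural tool is Lemma~\ref{Lemma1}(ii), applied to $\varphi$ and a competitor built from $\psi-t$. The factors $(1-\epsilon)^n$ and $\bigl(\tfrac{2}{\epsilon}-1\bigr)^n t^n$ suggest three ingredients: rescaling $\varphi$ by $(1-\epsilon)$, letting the set $\{\varphi<-\tfrac{2t}{\epsilon}\}$ drive the error region, and controlling that region by a function of oscillation at most $\bigl(\tfrac{2}{\epsilon}-1\bigr)t$, which is then compared with $\mathrm{Cap}_\Omega$. By approximation I will first assume $\varphi,\psi\in\mathcal E_0(\Omega)$ and $t>0$. The case $t=0$ is trivial, since the capacity term vanishes and $\{\psi<0\}=\Omega$. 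The general case of $\mathcal F(\Omega)$ follows from decreasing sequences: the sets $\{\varphi\le\psi-t\}$ and the sublevel sets behave monotonically enough, with the usual $\pm\delta$ adjustment of $t$ and of the threshold $-\tfrac{2t}{\epsilon}$.

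\textbf{Step 1: reduce to the rescaled maximum.} Put $u=(1-\epsilon)\varphi\in\mathcal F(\Omega)$ and $v=\max(u,\psi-t)$, noting that $\sup_\Omega(\psi-t)<0$. Since $(dd^c u)^n=(1-\epsilon)^n(dd^c\varphi)^n$, Lemma~\ref{Lemma1}(ii) applied to $u$ and $\psi-t$ gives
\[
(1-\epsilon)^n\int_{E}(dd^c\varphi)^n=\int_{E}(dd^c v)^n
\]
for every Borel $E\supset\{u\le \psi-t\}$. I take $E=\{\varphi<-\tfrac{2t}{\epsilon}\}\cup\{u\le\psi-t\}$. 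It then suffices to bound $\int_E(dd^cv)^n$ by the right-hand side of the lemma.

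\textbf{Step 2: split $E$.} On the open set $\{u<\psi-t\}$ we have $v=\psi-t$, so $(dd^cv)^n=(dd^c\psi)^n$ there. On $\{u>\psi-t\}\cap\{\varphi<-\tfrac{2t}{\epsilon}\}$ we have $v=u$. Wherever $u\ge\psi-t$ and $\varphi<-\tfrac{2t}{\epsilon}$, we get $\psi\le u+t<-(1-\epsilon)\tfrac{2t}{\epsilon}+t$. Hence $\psi<-t$ as soon as $\epsilon\le\tfrac12$, and the remaining range of $\epsilon$ should be handled by the same bookkeeping with the sharper threshold. The goal of the split is that the parts where $v$ coincides with $\psi-t$ are absorbed into $\int_{\{\psi<-t\}}(dd^c\psi)^n$.

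\textbf{Step 3: the contact set (main obstacle).} The remaining mass lives on the contact set $\{u=\psi-t\}$ and on the portion of $E$ where $v=u$, that is, where $\varphi$ is strictly below $\psi-t$ only after rescaling. On that contact piece we have $\varphi=\tfrac{\psi-t}{1-\epsilon}\le\psi-t$, so it lies inside $\{\varphi\le\psi-t\}$. There I want to estimate $(dd^cv)^n$ by a capacity bound. For this I consider the truncation $w=\max\bigl(v,\psi-t\,\ldots\bigr)$, more precisely a function of the form $\max\bigl(v,-\tfrac{2t}{\epsilon}\bigr)-\max(\ldots)$. It should have oscillation at most $\bigl(\tfrac{2}{\epsilon}-1\bigr)t$ and the same Monge--Amp\`ere mass as $v$ on the relevant piece, by locality of $(dd^c\cdot)^n$ on plurifine open sets (Theorem~4.1 in \cite{KH09}). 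Then $\bigl((\tfrac{2}{\epsilon}-1)t\bigr)^{-1}w$ is a competitor with values in $[-1,0]$ up to a constant shift. This gives $\int_{\{\varphi\le\psi-t\}}(dd^cw)^n\le\bigl(\tfrac{2}{\epsilon}-1\bigr)^n t^n\,\mathrm{Cap}_\Omega(\{\varphi\le\psi-t\})$.

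The hard part is choosing the truncation levels exactly. The gap between $-\tfrac{2t}{\epsilon}$ and the level $-t$ (shifted by $t$) must produce precisely the oscillation $\bigl(\tfrac{2}{\epsilon}-1\bigr)t$. At the same time, the truncation must not alter $(dd^cv)^n$ on the part of the contact set inside $\{\varphi<-\tfrac{2t}{\epsilon}\}$, including the boundary mass in the plurifine sense. I would first try taking $w=\max\bigl(v,-\tfrac{2t}{\epsilon}\bigr)$ restricted to $\{v>\ldots\}$, and check via the comparison inequality of Step 1 that the level sets line up. If they do not, I would replace $-\tfrac{2t}{\epsilon}$ by $-\tfrac{2t}{\epsilon}+\delta$ and let $\delta\to0$.
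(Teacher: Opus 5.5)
There is a genuine gap: nothing in your argument ever converts Monge--Amp\`ere mass of $\varphi$ into mass of $\psi$.

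In Step~1 you apply Lemma~\ref{Lemma1}(ii) with $u=(1-\epsilon)\varphi$ as the $\mathcal F$-function. This only rewrites $(1-\epsilon)^n\int_E(dd^c\varphi)^n$ as $\int_E(dd^cv)^n$, where $v=\max(u,\psi-t)$. On the piece $\{\varphi<-\tfrac{2t}{\epsilon}\}\cap\{u>\psi-t\}$ you have $v=u$, hence $(dd^cv)^n=(1-\epsilon)^n(dd^c\varphi)^n$ there. So the quantity you must estimate reappears unchanged. Knowing that this piece lies in $\{\psi<-t\}$ says nothing about $(dd^c\psi)^n$ on it. The piece where $v=\psi-t$, namely $\{u<\psi-t\}$, does carry $(dd^c\psi)^n$, but it need not lie in $\{\psi<-t\}$ and $\psi$ is unbounded below there, so it is not absorbed either.

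Step~3 cannot repair this. The truncation $w$ is never actually specified, and a bounded-oscillation competitor only yields capacity bounds; it cannot produce the term $\int_{\{\psi<-t\}}(dd^c\psi)^n$. The restriction $\epsilon\le\tfrac12$ in Step~2, with the remaining range left to unspecified ``bookkeeping'', is a symptom of the same misalignment.

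The missing idea is to apply Lemma~\ref{Lemma1} with $\psi$ as the $\mathcal F$-function. The paper sets $v_t=\max\bigl((1-\epsilon)\varphi-t,-\tfrac{2t}{\epsilon}\bigr)$ and $u_t=\max(\psi,v_t)$. Note that it is $(1-\epsilon)\varphi$, not $\psi$, that is shifted down by $t$; this makes $v_t<-t$ once $\varphi<0$, $0<\epsilon<1$, $t>0$.

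1. Since $\{\psi\le v_t\}\subset\{\psi<-t\}$, Lemma~\ref{Lemma1}(ii) gives $\int_{\{\psi<-t\}}(dd^cu_t)^n=\int_{\{\psi<-t\}}(dd^c\psi)^n$.
2. On the plurifine open set $E_t=\{\varphi<-\tfrac{2t}{\epsilon}\}\cap\{\varphi>\psi-t\}$ one has $(1-\epsilon)\varphi-t>\varphi+t>\psi$. Hence $u_t=v_t$ there, and $E_t\subset\{\psi<-t\}$ for every $\epsilon\in(0,1]$.
3. By plurifine locality \cite{BT87}, $\int_{E_t}(dd^cv_t)^n\le\int_{\{\psi<-t\}}(dd^c\psi)^n$.
4. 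Only now does your truncation idea enter, applied to $\varphi$ alone. Lemma~\ref{Lemma1} for $(1-\epsilon)\varphi$ and the constant $-(\tfrac2\epsilon-1)t$ gives $\int_{\{\varphi<-2t/\epsilon\}}(dd^cv_t)^n=(1-\epsilon)^n\int_{\{\varphi<-2t/\epsilon\}}(dd^c\varphi)^n$.
5. The part of $\{\varphi<-\tfrac{2t}{\epsilon}\}$ outside $E_t$ lies in $\{\varphi\le\psi-t\}$. There $v_t+t\in[-(\tfrac2\epsilon-1)t,0]$, which gives the capacity bound.

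Two smaller points. First, no reduction to $\mathcal E_0(\Omega)$ is needed, since Lemma~\ref{Lemma1} holds in $\mathcal F(\Omega)$ and $u_t,v_t$ are bounded. Your reduction is also not justified: masses of sublevel sets are not continuous along decreasing sequences, and $\{\varphi_j\le\psi_j-t\}$ is not monotone in $j$. Second, $t=0$ is not trivial. With the capacity term read as $0$, the inequality would assert $(1-\epsilon)^n\int_\Omega(dd^c\varphi)^n\le\int_\Omega(dd^c\psi)^n$, which fails for $\varphi=2\psi$ and $\epsilon<\tfrac12$. Only $t>0$ is meaningful, and that is all Lemma~\ref{Lemma3} uses.
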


\begin{proof}
The inequality is trivial if $\varphi \equiv 0$. Thus, we may assume that $\varphi \not\equiv 0$, which implies that $\varphi < 0$ on $\Omega$.
We set
$$u_t = \max \Big ( ( 1 - \epsilon ) \varphi - t, \psi , - \frac { 2t } { \epsilon } \Big ),$$
$$v_t = \max \Big ( ( 1 - \epsilon ) \varphi - t , - \frac { 2t } { \epsilon } \Big ) =\max \Bigg( ( 1 - \epsilon ) \varphi  , - t \Big( \frac { 2 } { \epsilon } - 1 \Big) \Bigg) - t.$$
We set
$$E_t=\Big\{ \varphi <- \frac { 2t } { \epsilon } \Big\}\cap \{ \varphi > \psi - t \}.$$
Since $(1-\epsilon)\varphi - t > \psi$ on $E_t$, we have $u_t = v_t$ on $E_t$. By Corollary 4.3 in \cite{BT87},
$$
\int_{E_t} (dd^c u_t)^n = \int_{E_t} (dd^c v_t)^n.
$$

Moreover, since
$$
E_t \cup \left\{\psi \leq \max\!\left((1-\epsilon)\varphi - t,\,-\frac{2t}{\epsilon}\right)\right\}
\subset \{\psi < -t\},
$$
Lemma \ref{Lemma1} implies
$$
\int_{E_t} (dd^c u_t)^n
\leq \int_{\{\psi < -t\}} (dd^c u_t)^n
= \int_{\{\psi < -t\}} (dd^c \psi)^n.
$$

On the other hand, by Lemma \ref{Lemma1},
\begin{align*}
\int_{E_t} (dd^c v_t)^n
&= \int_{\{\varphi < -\frac{2t}{\epsilon}\}} (dd^c v_t)^n
- \int_{\{\varphi < -\frac{2t}{\epsilon}\}\cap \{\varphi \leq \psi - t\}} (dd^c v_t)^n \\
&= \int_{\{\varphi < -\frac{2t}{\epsilon}\}}
\bigg(dd^c \max\Big((1-\epsilon)\varphi - t,\,-t\big(\tfrac{2}{\epsilon}-1\big)\Big)\bigg)^n \\
& - \int_{\{\varphi < -\frac{2t}{\epsilon}\}\cap \{\varphi \leq \psi - t\}}
\bigg(dd^c \max\Big((1-\epsilon)\varphi - t,\,-t\big(\tfrac{2}{\epsilon}-1\big)\Big)\bigg)^n \\
&\geq (1-\epsilon)^n
\int_{\{\varphi < -\frac{2t}{\epsilon}\}} (dd^c \varphi)^n  - \left(\tfrac{2}{\epsilon}-1\right)^n t^n \,
\mathrm{Cap}_\Omega(\{\varphi \leq \psi - t\}).
\end{align*}

Combining the above estimates yields
$$
(1-\epsilon)^n \int_{\{\varphi < -\frac{2t}{\epsilon}\}} (dd^c \varphi)^n
\leq \int_{\{\psi < -t\}} (dd^c \psi)^n
+ \left(\tfrac{2}{\epsilon}-1\right)^n t^n \,
\mathrm{Cap}_\Omega(\{\varphi \leq \psi - t\}).
$$
 
\end{proof}

\begin{lemma}\label{Lemma3}
Let $\varphi, \psi \in \mathcal{F}(\Omega)$ satisfy $\varphi \preceq_{\mathrm{Cap}} \psi$ on $\Omega$. 
%$$
%\varliminf\limits_{t \to +\infty} t^n \,\mathrm{Cap}_\Omega\big(\{z\in \Omega : \varphi (z) < \psi (z) - t\}\big) = 0,$$
Then
$$\int_{ \{ \varphi = -\infty \} } (dd^c \varphi )^n\leq \int_{ \{ \psi = -\infty \} } (dd^c \psi )^n.$$
\end{lemma}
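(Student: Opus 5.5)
The plan is to apply Lemma~\ref{Lemma2} with $\Omega$ in place of the subdomain $D$ and then pass to limits. One issue comes first: Definition~\ref{def:cap-order} only controls $\mathrm{Cap}_\Omega(\{z\in D:\varphi<\psi-t\})$ for $D\Subset\Omega$. Lemma~\ref{Lemma2}, however, uses the whole set $\{\varphi\le\psi-t\}$. For functions in $\mathcal{F}(\Omega)$ we will need to localize. I would proceed as follows.

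\textbf{First, the global hypothesis.} Assume first that
\[
\varliminf_{t\to+\infty} t^n\,\mathrm{Cap}_\Omega(\{\varphi<\psi-t\})=0,
\]
taken over all of $\Omega$. Fix $0<\epsilon<1$ and choose $t_k\to+\infty$ along which $t_k^n\mathrm{Cap}_\Omega(\{\varphi<\psi-t_k\})\to 0$. To handle ``$\le$'' versus ``$<$'', note that $\{\varphi\le\psi-t\}\subset\{\varphi<\psi-t'\}$ for $t'<t$. So I apply Lemma~\ref{Lemma2} at $t=2t_k$, say, bounding the capacity term by $\mathrm{Cap}_\Omega(\{\varphi<\psi-t_k\})$. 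The constant factor $2^n$ is harmless.

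Letting $k\to\infty$, the sets $\{\varphi<-4t_k/\epsilon\}$ decrease to $\{\varphi=-\infty\}$ and $\{\psi<-2t_k\}$ decrease to $\{\psi=-\infty\}$. Since $(dd^c\varphi)^n$ and $(dd^c\psi)^n$ have finite total mass for $\mathcal{F}$, monotone convergence gives
\[
(1-\epsilon)^n\int_{\{\varphi=-\infty\}}(dd^c\varphi)^n\le\int_{\{\psi=-\infty\}}(dd^c\psi)^n .
\]
Then let $\epsilon\to0$.

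\textbf{Second, the localization, which is the main obstacle.} For general $D\Subset\Omega$ I would replace $\psi$ by $\psi_D:=\max(\psi,\,A h)$, where $h\in\mathcal{E}_0(\Omega)$ is an exhaustion-type function and $A$ is chosen so that $\psi_D=\psi$ near $\overline D$ while $\psi_D$ is bounded near $\partial\Omega$. Alternatively, I would replace $\varphi$ by $\max(\varphi,\psi)$ outside $D$ via a gluing. The aim is that $\{\varphi<\psi_D-t\}\subset D$ for large $t$, because $\psi_D\ge -C$ off $D$ while $\varphi$ is only large negative on a set of small capacity there.

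The cleanest option is probably the following. The measures $1_{\{\varphi=-\infty\}}(dd^c\varphi)^n$ are local, being determined on $D$ by $\varphi|_D$, by Theorem~4.1 in \cite{KH09} and the locality of the operator on plurifine open sets. So it suffices to prove
\[
\int_{\{\varphi=-\infty\}\cap K}(dd^c\varphi)^n\le\int_{\{\psi=-\infty\}\cap K'}(dd^c\psi)^n
\]
for compacts $K\subset D$. For this I would use $\tilde\varphi=\max(\varphi,Bh)$ and $\tilde\psi=\max(\psi,Bh)$, with $h\in\mathcal{E}_0$ and $B$ large. These lie in $\mathcal{F}$, coincide with $\varphi,\psi$ on the region where $Bh$ is below them, and have the same $-\infty$ sets and the same masses there, again by \cite{KH09}.

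I expect the delicate point to be checking that $\{\tilde\varphi<\tilde\psi-t\}$ is contained in a fixed $D\Subset\Omega$, up to a set of negligible capacity. Off $D'=\{h<-\delta\}$ we have $\tilde\varphi\ge Bh\ge -B\sup|h|$ bounded below, while $\tilde\psi\le0$. Hence $\{\tilde\varphi<\tilde\psi-t\}\subset D'\cap\{\varphi<\psi-t\}$ for $t$ large. The global argument from the first step then applies to $\tilde\varphi,\tilde\psi$. Finally, letting $B\to\infty$, so that the $-\infty$ sets exhaust, completes the proof.
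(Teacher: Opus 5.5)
Your first step is exactly the paper's proof. The paper applies Lemma~\ref{Lemma2}, uses $\varliminf_{t\to+\infty}t^n\,\mathrm{Cap}_\Omega(\{\varphi\le\psi-t\})=0$ on all of $\Omega$, and lets $t\to+\infty$, then $\epsilon\to0$. You are right that this is a global condition, not the local one in Definition~\ref{def:cap-order}. The paper does not localize at all, and where the lemma is used (the proof of Lemma~3.4) it is precisely this global condition that is verified. Your treatment of ``$\le$'' versus ``$<$'' is fine up to one detail: $\{\varphi\le\psi-2t_k\}\subset\{\varphi<\psi-t_k\}$ fails on $\{\varphi=\psi=-\infty\}$. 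That set is pluripolar, hence of zero capacity, so the capacity bound still holds.

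The localization step, however, does not work. Since $h\in\mathcal{E}_0(\Omega)$ is bounded, $\tilde\varphi=\max(\varphi,Bh)\ge B\inf_\Omega h>-\infty$. So $\tilde\varphi$ and $\tilde\psi$ are bounded, $\{\tilde\varphi=-\infty\}=\{\tilde\psi=-\infty\}=\emptyset$, and the inequality you would prove for them is $0\le 0$. Your claim that they ``have the same $-\infty$ sets and the same masses there'' is false: the region $\{Bh<\varphi\}$ where $\tilde\varphi=\varphi$ never meets $\{\varphi=-\infty\}$. The same objection applies to $\psi_D=\max(\psi,Ah)$. Letting $B\to\infty$ cannot repair this, because mass on pluripolar sets is not continuous along decreasing sequences. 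In the unit ball, $\max(\log|z|,-B)\searrow\log|z|$; each truncation has no pluripolar mass, while the limit has a Dirac mass at $0$. Any modification must keep the singularity.

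What works is the construction in the proof of Lemma~3.4:
\begin{itemize}
\item For compact $K\subset\{\varphi=-\infty\}$ with $K\subset D\Subset\Omega$, take $\varphi_K\in\mathcal{F}(\Omega)$ with $\varphi_K\ge\varphi$ and $(dd^c\varphi_K)^n=1_K(dd^c\varphi)^n$ (Lemma~4.3 of \cite{ACCH09}).
\item Off $\overline D$, $\varphi_K$ coincides with $u=\sup\{v\in PSH^-(\Omega):v\le\varphi_K\text{ on }\Omega\setminus\overline D\}$. This $u$ lies in $\mathcal{F}^a(\Omega)$, so $t^n\mathrm{Cap}_\Omega(\{u<-t\})\to0$ by \cite{CKZ05}.
\item Hence the pair $\varphi_K$, $\max(\varphi_K,\psi)$ satisfies the global condition, with your local hypothesis used only on a neighbourhood of $\overline D$. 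Your first step applies to this pair.
\item Lemma~4.1 of \cite{ACCH09}, applied with $\max(\varphi_K,\psi)\ge\psi$ on $K$ and $\max(\varphi_K,\psi)\ge\varphi_K$ off $K$, then yields $\int_K(dd^c\varphi)^n\le\int_{K\cap\{\psi=-\infty\}}(dd^c\psi)^n$.
\item Exhausting $\{\varphi=-\infty\}$ by such $K$ proves the lemma under the local hypothesis as stated.
\end{itemize}
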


\begin{proof}
By Lemma \ref{Lemma2}, we have
$$
(1-\epsilon)^n \int_{\{\varphi < -\frac{2t}{\epsilon}\}} (dd^c \varphi)^n
\leq \int_{\{\psi < -t\}} (dd^c \psi)^n
+ \left(\frac{2}{\epsilon}-1\right)^n t^n \,
\mathrm{Cap}_\Omega(\{\varphi \leq \psi - t\}).
$$

Since
$$
\liminf_{t \to +\infty} \,
t^n \mathrm{Cap}_\Omega(\{\varphi \leq \psi - t\}) = 0,
$$
it follows that
$$
(1-\epsilon)^n \int_{\{\varphi = -\infty\}} (dd^c \varphi)^n
\leq \int_{\{\psi = -\infty\}} (dd^c \psi)^n.
$$

Letting $\epsilon \to 0$ yields
$$
\int_{\{\varphi = -\infty\}} (dd^c \varphi)^n
\leq \int_{\{\psi = -\infty\}} (dd^c \psi)^n.
$$
\end{proof}

\begin{lemma}\label{Lemma4}
Let $\varphi, \psi \in \mathcal{E}(\Omega)$ satisfy $\varphi \preceq_{\mathrm{Cap}} \psi$ on $\Omega$. Then, for every pluripolar Borel set $E \subset \Omega$,
$$
\int_E (dd^c \varphi)^n \leq \int_E (dd^c \psi)^n.
$$
\end{lemma}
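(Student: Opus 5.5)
We reduce Lemma \ref{Lemma4} to Lemma \ref{Lemma3} by localizing and passing from $\mathcal{E}$ to $\mathcal{F}$. Since $(dd^c\varphi)^n$ restricted to pluripolar sets is carried by $\{\varphi=-\infty\}$ (a standard fact for $\mathcal{E}$, from Cegrell's theory), it suffices to prove the inequality for Borel sets $E\subset\{\varphi=-\infty\}$. By inner regularity we may further assume $E$ is relatively compact in $\Omega$. Because both sides are measures in $E$, we fix a subdomain $D\Subset\Omega$ containing $\overline{E}$ and work there.

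The key construction is as follows. Choose $D\Subset D'\Subset\Omega$ and functions $\tilde\varphi,\tilde\psi\in\mathcal{F}(\Omega)$ with $\tilde\varphi=\varphi$ and $\tilde\psi=\psi$ on $D'$. The natural choice is the maximal-type envelope
\[
\tilde\varphi=\sup\{u\in PSH^-(\Omega): u\le \varphi \text{ on } D'\},
\]
or $\varphi^{D'}$. By Cegrell's results (Theorem 2.1 in \cite{Ce04} and the localization of $\mathcal{E}$) this lies in $\mathcal{F}(\Omega)$ when $\varphi\in\mathcal{E}$ and $D'\Subset\Omega$. Moreover $(dd^c\tilde\varphi)^n=(dd^c\varphi)^n$ on $D'$, by locality of the Monge--Amp\`ere operator on plurifine open sets (Theorem 4.1 in \cite{KH09}). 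Two points need care here: that $\tilde\varphi$ really equals $\varphi$ on $D'$ rather than only dominating it (a possible gap to fix by subtracting a large multiple of an exhaustion, if necessary), and that one first reduces to $\varphi,\psi\le 0$ with $\sup\psi<0$ after adding constants, which does not affect singular sets or $\preceq_{\mathrm{Cap}}$.

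The main obstacle is verifying $\tilde\varphi\preceq_{\mathrm{Cap}}\tilde\psi$ on all of $\Omega$, since Lemma \ref{Lemma3} and Lemma \ref{Lemma2} use $\mathrm{Cap}_\Omega(\{\varphi\le\psi-t\})$ on the whole of $\Omega$, not on a compact piece. Inside $D'$ the sets agree with those for $\varphi,\psi$ and are controlled by the hypothesis. Outside $D'$ we want $\tilde\varphi$ and $\tilde\psi$ to be bounded, so that $\{\tilde\varphi<\tilde\psi-t\}\setminus D'$ is empty for large $t$. This boundedness should follow because $\tilde\varphi$ is maximal on $\Omega\setminus\overline{D'}$ with boundary values from a function bounded on $\partial D''$ for slightly larger $D''$. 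If this fails, we instead replace $\tilde\varphi,\tilde\psi$ by $\max(\tilde\varphi,A\rho)$, $\max(\tilde\psi,A\rho)$ with $\rho$ an exhaustion function of $D'$ shifted so that the max is unchanged near $\overline{D}$. These are bounded near $\partial\Omega$, so the exceptional set lies in a compact set where the hypothesis, together with subadditivity of capacity, gives the $\varliminf$ condition. A further subtlety is that the $\varliminf$ along a sequence $t_k$ must work simultaneously; we arrange this by using only one compact set.

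With $\tilde\varphi\preceq_{\mathrm{Cap}}\tilde\psi$ in hand, we rerun the proof of Lemma \ref{Lemma3}, but with $\{\tilde\varphi<-2t/\epsilon\}\cap E$ in place of the full sublevel set. Lemma \ref{Lemma2} as stated only gives global sublevel integrals, so instead we apply Lemma \ref{Lemma3} to the measures restricted to the singular sets inside $D$, where both functions agree with the originals. Pluripolar mass outside $D$ is bounded (the modified functions are bounded there), so the singular masses of $\tilde\varphi,\tilde\psi$ live in $D$. If Lemma \ref{Lemma3} gives only total singular mass, we localize it to $E$ by varying $D$ and using that for Borel $E$ the measure $1_{\{\varphi=-\infty\}}(dd^c\varphi)^n$ is determined by its values on small balls. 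This yields $\int_E(dd^c\varphi)^n\le\int_E(dd^c\psi)^n$.
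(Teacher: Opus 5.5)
There is a genuine gap at the step you yourself flag as the main obstacle. Lemma~\ref{Lemma3} (through Lemma~\ref{Lemma2}) needs $\varliminf_{t\to+\infty} t^n\,\mathrm{Cap}_\Omega(\{\tilde\varphi\le\tilde\psi-t\})=0$ for the \emph{whole} exceptional set in $\Omega$, including its part near $\partial\Omega$. Neither of your arguments for that part works.

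Your claim that $\tilde\varphi$ is bounded outside a slightly larger $D''$ is false in general. Suppose the polar set of $\varphi$ contains $L\cap D\neq\emptyset$ for a complex line $L$, e.g.\ $\varphi=-(-\log|z_1/2|)^{a}$ on the unit ball with $a>0$ small, $\psi=0$, and $L=\{z_1=0\}$. Then any plurisubharmonic function on $\Omega$ that equals $\varphi$ on $D$ restricts to a subharmonic function on the disc $L\cap\Omega$. That function is $-\infty$ on a nonempty open subset, hence $\equiv-\infty$ on all of $L\cap\Omega$, right up to $\partial\Omega$. So $\{\tilde\varphi<\tilde\psi-t\}\setminus\overline{D''}\neq\emptyset$ for every $t$, and no function agreeing with $\varphi$ near $\overline D$ can make this set empty.

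The fallback $\max(\tilde\varphi,A\rho)$ fails for the same reason. With $\rho$ finite, it changes $\tilde\varphi$ exactly on $\{\tilde\varphi=-\infty\}\cap D\supset E$, which erases the mass you want to compare. Likewise, your localization remark that the singular masses live in $D$ ``because the modified functions are bounded there'' is unfounded. In fact $(dd^c\tilde\varphi)^n$ and $(dd^c\tilde\psi)^n$ can charge pluripolar subsets of $\partial D'$.

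What is missing is that near $\partial\Omega$ you need \emph{decay} of capacity, not emptiness. The tool for this is Proposition 3.4 of \cite{CKZ05}: $t^n\,\mathrm{Cap}_\Omega(\{u<-t\})\to 0$ for $u\in\mathcal{F}^a(\Omega)$.

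The paper proceeds as follows.
\begin{itemize}
\item It first replaces $\varphi$ by $\varphi_E\in\mathcal{F}(\Omega)$ with $\varphi_E\ge\varphi$ and $(dd^c\varphi_E)^n=1_E(dd^c\varphi)^n$ (Lemma 4.3 of \cite{ACCH09}).
\item The envelope $u$ of $\varphi_E$ on $\Omega\setminus\overline D$ has Monge--Amp\`ere measure carried by $\Omega\setminus D$, where $\varphi_E$ has no pluripolar mass. Hence $u\in\mathcal{F}^a(\Omega)$ by Lemma 4.1 of \cite{ACCH09}.
\item It compares $\varphi_E$ with $\max(\varphi_E,\psi)$, not with a truncation of $\psi$. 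On $\overline D$ the exceptional set lies in $\{\varphi<\psi-t\}$ because $\varphi_E\ge\varphi$; off $\overline D$ it lies in $\{u<-t\}$. So Lemma~\ref{Lemma3} applies.
\item Since the pluripolar part of $(dd^c\varphi_E)^n$ is exactly $1_E(dd^c\varphi)^n$, the global conclusion of Lemma~\ref{Lemma3} localizes to $E$ at once. One splits the right-hand side over $E$ and $\Omega\setminus E$ and applies Lemma 4.1 of \cite{ACCH09} on each piece.
\end{itemize}

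Your $\tilde\varphi$ can be rescued with the same ingredient. The envelope of $\tilde\varphi$ on $\Omega\setminus\overline{D''}$ lies in $\mathcal{F}^a(\Omega)$, because $(dd^c\tilde\varphi)^n$ is carried by $\overline{D'}$. Comparing $\tilde\varphi$ with $\max(\tilde\varphi,\psi)$, Lemma~\ref{Lemma3} and Lemma 4.1 of \cite{ACCH09} give
$\int_{D'\cap\{\varphi=-\infty\}}(dd^c\varphi)^n\le\int_{\overline{D'}\cap\{\psi=-\infty\}}(dd^c\psi)^n$
for every open $D'\Subset\Omega$. The lemma then follows by regularity of the measures. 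Without this $\mathcal{F}^a$/capacity-decay step, however, your proposal does not establish the hypothesis of Lemma~\ref{Lemma3}.
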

\begin{proof}
Without loss of generality, it suffices to prove the inequality when $E$ is a compact pluripolar set. By Lemma 4.3 in \cite{ACCH09}, there exists $\varphi_E \in \mathcal{F}(\Omega)$ such that $\varphi_E \geq \varphi$ and
$$
(dd^c \varphi_E)^n = 1_E (dd^c \varphi)^n,
$$
where $1_E$ denotes the characteristic function of $E$.

Fix a domain $D$ with $E \Subset D \Subset \Omega$, and define
$$
u = \sup \{ v \in PSH^-(\Omega) : v \leq \varphi_E \ \text{on } \Omega \setminus \overline{D} \}.
$$
Then $u \in \mathcal{F}(\Omega)$, $u \geq \varphi_E$ on $\Omega$, and $u = \varphi_E$ on $\Omega \setminus \overline{D}$. In particular,
$$
\mathrm{supp}(dd^c u)^n \subset \Omega \setminus D.
$$

Moreover, by Lemma 4.1 in \cite{ACCH09},
$$
\int_A (dd^c u)^n \leq \int_A (dd^c \varphi_E)^n = 0,
\quad \text{for all pluripolar Borel sets } A \subset \Omega \setminus D.
$$
Hence $u \in \mathcal{F}^a(\Omega)$. By Proposition 3.4 in \cite{CKZ05},
$$
\lim_{t \to +\infty} t^n \,\mathrm{Cap}_\Omega(\{u < -t\}) = 0.
$$
\n
Consequently,
\begin{align*}
&\varliminf_{t \to +\infty} t^n \,
\mathrm{Cap}_\Omega\big(\{\varphi_E < \max(\varphi_E,\psi) - t\}\big) \\
&\quad \leq \varliminf_{t \to +\infty} t^n \,
\mathrm{Cap}_\Omega\big(\{\varphi < \psi - t\} \cap \overline{D}\big)
+ \varlimsup_{t \to +\infty} t^n \,
\mathrm{Cap}_\Omega\big(\{u < -t\} \cap (\Omega \setminus \overline{D})\big) \\
&= 0.
\end{align*}
\n
It thus follows from Lemma \ref{Lemma3} and Lemma 4.1 in \cite{ACCH09} that
\begin{align*}
\int_E (dd^c \varphi)^n
&= \int_{\{\varphi_E = -\infty\}} (dd^c \varphi_E)^n \\
&\leq \int_{\{\max(\varphi_E,\psi) = -\infty\}} (dd^c \max(\varphi_E,\psi))^n \\
&= \int_{\{\max(\varphi_E,\psi) = -\infty\} \cap E} (dd^c \max(\varphi_E,\psi))^n \\
&\quad + \int_{\{\max(\varphi_E,\psi) = -\infty\} \setminus E} (dd^c \max(\varphi_E,\psi))^n \\
&\leq \int_{\{\max(\varphi_E,\psi) = -\infty\} \cap E} (dd^c \psi)^n
+ \int_{\{\max(\varphi_E,\psi) = -\infty\} \setminus E} (dd^c \varphi_E)^n \\
&\leq \int_E (dd^c \psi)^n.
\end{align*}
\end{proof}

\n
{\bf Proof of Theorem \ref{Comparion_Principle_Pluripolar_Set}:}

\n
\begin{proof}
First, assume that $\varphi_i \preceq_{\mathrm{Cap}} \psi_i$ and $\psi_i \preceq_{\mathrm{Cap}} \varphi_i$ on $\Omega$ for all $1 \leq i \leq n$. Then
$$
\sum_{i=1}^n a_i \varphi_i \preceq_{\mathrm{Cap}} \sum_{i=1}^n a_i \psi_i,
\qquad
\sum_{i=1}^n a_i \psi_i \preceq_{\mathrm{Cap}} \sum_{i=1}^n a_i \varphi_i
$$
on $\Omega$ for all $a_1,\dots,a_n \geq 0$. By Lemma \ref{Lemma4},
$$
\int_E \left(dd^c \sum_{i=1}^n a_i \varphi_i\right)^n
=
\int_E \left(dd^c \sum_{i=1}^n a_i \psi_i\right)^n,
$$
for every pluripolar Borel set $E \subset \Omega$ and all $a_i \geq 0$. By polarization, this implies
$$
\int_E dd^c \varphi_1 \wedge \cdots \wedge dd^c \varphi_n
=
\int_E dd^c \psi_1 \wedge \cdots \wedge dd^c \psi_n.
$$

In the general case, we have
$$
\varphi_i \preceq_{\mathrm{Cap}} \max(\varphi_i,\psi_i), \qquad
\max(\varphi_i,\psi_i) \preceq_{\mathrm{Cap}} \varphi_i,
\quad 1 \leq i \leq n.
$$
Applying the first step yields
\begin{align*}
\int_E dd^c \varphi_1 \wedge \cdots \wedge dd^c \varphi_n
&=
\int_E dd^c \max(\varphi_1,\psi_1) \wedge \cdots \wedge dd^c \max(\varphi_n,\psi_n).
\end{align*}
Moreover, Lemma 4.1 in \cite{ACCH09} gives
$$
\int_E dd^c \varphi_1 \wedge \cdots \wedge dd^c \varphi_n
\leq
\int_E dd^c \psi_1 \wedge \cdots \wedge dd^c \psi_n,
$$
for every pluripolar Borel set $E \subset \Omega$.
 
\end{proof}

\n
{\bf Proof of Theorem \ref{Comparion_Principle1}}

\n
\begin{proof}
i) We first prove the theorem under the assumption
$$
\varliminf_{z \to \xi} \big(\varphi(z) - \psi(z)\big) \geq 0,
\quad \forall \xi \in \partial \Omega.
$$
Fix $\epsilon > 0$ sufficiently small, and choose a domain $D$ such that
$$
\{\varphi < \psi - \epsilon\} \Subset D \Subset \Omega.
$$
Then $\varphi \leq \max(\varphi,\psi-\epsilon)$ on $D$, and
$\varphi = \max(\varphi,\psi-\epsilon)$ on $D \setminus \{\varphi < \psi - \epsilon\}$.
By Proposition 3.1 in \cite{KH09},
\begin{align*}
&\frac{1}{n!} \int_D (\max(\varphi,\psi-\epsilon)-\varphi)^n \,
dd^c w_1 \wedge \cdots \wedge dd^c w_n \\
&\quad + \int_D (-w_1)\,(dd^c \max(\varphi,\psi-\epsilon))^n
\leq \int_D (-w_1)\,(dd^c \varphi)^n.
\end{align*}
By Theorem 4.1 in \cite{KH09}, this yields
\begin{align*}
&\frac{1}{n!} \int_D (\max(\varphi,\psi-\epsilon)-\varphi)^n \,
dd^c w_1 \wedge \cdots \wedge dd^c w_n \\
&\quad + \int_{D \cap \{\varphi \leq \psi - \epsilon\}} (-w_1)\,(dd^c \max(\varphi,\psi-\epsilon))^n \\
&\leq \int_{D \cap \{\varphi \leq \psi - \epsilon\}} (-w_1)\,(dd^c \varphi)^n.
\end{align*}
On $\{\varphi > \psi - \epsilon\} \cap \{\varphi \neq -\infty\}$,
$$
(dd^c \max(\varphi,\psi-\epsilon))^n \geq (dd^c \varphi)^n.
$$
Moreover, by Lemma 4.1 in \cite{ACCH09},
$$
(dd^c \max(\varphi,\psi-\epsilon))^n
= (dd^c \max(\varphi,\psi))^n
\quad \text{on } \{\varphi = \psi = -\infty\}.
$$
Combining these estimates, we obtain
\begin{align*}
&\frac{1}{n!} \int_{\{\varphi < \psi - \epsilon\}} (\psi - \epsilon - \varphi)^n \,
dd^c w_1 \wedge \cdots \wedge dd^c w_n \\
&\quad + \int_{\{\varphi < \psi - \epsilon\}} (-w_1)\,(dd^c \psi)^n
+ \int_{D \cap \{\varphi = \psi = -\infty\}} (-w_1)\,(dd^c \max(\varphi,\psi))^n \\
&\leq \int_{\{\varphi < \psi - \epsilon\}} (-w_1)\,(dd^c \varphi)^n
+ \int_{D \cap \{\varphi = \psi = -\infty\}} (-w_1)\,(dd^c \varphi)^n.
\end{align*}
Letting $\epsilon \to 0$ gives
\begin{align*}
&\frac{1}{n!} \int_{\{\varphi < \psi\}} (\psi - \varphi)^n \,
dd^c w_1 \wedge \cdots \wedge dd^c w_n \\
&\quad + \int_{\{\varphi < \psi\}} (-w_1)\,(dd^c \psi)^n
+ \int_{\{\varphi = \psi = -\infty\}} (-w_1)\,(dd^c \max(\varphi,\psi))^n \\
&\leq \int_{\{\varphi < \psi\}} (-w_1)\,(dd^c \varphi)^n
+ \int_{\{\varphi = \psi = -\infty\}} (-w_1)\,(dd^c \varphi)^n.
\end{align*}
\n
(ii) We now consider the case where $\varphi \in \mathcal{N}(H)$ and $\psi \leq H$ on $\Omega$. There exists $u \in \mathcal{N}(\Omega)$ such that
$$
u + H \leq \varphi \leq H.
$$

Let $\{\Omega_j\}_{j \geq 1}$ be a fundamental sequence in $\Omega$, and define
$$
u_j = \sup \{ v \in PSH(\Omega) : v \leq u \ \text{on } \Omega \setminus \overline{\Omega_j} \}.
$$
Then $u_j \searrow u$ on $\Omega$. Since $\psi \leq H$, for every $\epsilon > 0$ we have on $\Omega \setminus \overline{\Omega_j}$
$$
\varphi \geq u + H = u_j + H \geq u_j + \psi - \epsilon.
$$
Applying case (i) to $\varphi$ and $\psi + u_j - \epsilon$, we obtain
\begin{align*}
\frac{1}{n!} &\int_{\{\varphi < \psi + u_j - \epsilon\}} (\psi + u_j - \epsilon - \varphi)^n \,
dd^c w_1 \wedge \cdots \wedge dd^c w_n \\
 + &\int_{\{\varphi < \psi + u_j - \epsilon\}} (-w_1)\,(dd^c(\psi + u_j - \epsilon))^n \\
+ &\int_{\{\varphi = \psi + u_j - \epsilon = -\infty\} \cap \Omega_{j+1}}
(-w_1)\,(dd^c \max(\varphi,\psi + u_j - \epsilon))^n \\
\leq &\int_{\{\varphi < \psi + u_j - \epsilon\}} (-w_1)\,(dd^c \varphi)^n
+ \int_{\{\varphi = \psi + u_j - \epsilon = -\infty\} \cap \Omega_{j+1}}
(-w_1)\,(dd^c \varphi)^n.
\end{align*}

By Lemma 4.1 in \cite{ACCH09}, we have
$$
(dd^c \max(\varphi,\psi + u_j - \epsilon))^n
\geq (dd^c \max(\varphi,\psi))^n
\quad \text{on } \{\max(\varphi,\psi) = -\infty\}.
$$

Hence,
\begin{align*}
\frac{1}{n!} &\int_{\{\varphi < \psi + u_j - \epsilon\}} (\psi + u_j - \epsilon - \varphi)^n \,
dd^c w_1 \wedge \cdots \wedge dd^c w_n \\
 + &\int_{\{\varphi < \psi + u_j - \epsilon\}} (-w_1)\,(dd^c \psi)^n  + \int_{\{\varphi = \psi = -\infty\} \cap \Omega_{j+1}}
(-w_1)\,(dd^c \max(\varphi,\psi))^n \\
\leq &\int_{\{\varphi < \psi + u_j - \epsilon\} \cap \Omega_{j+1}}
(-w_1)\,(dd^c \varphi)^n \\
\leq &\int_{\{\varphi < \psi\} \cap \Omega_{j+1}}
(-w_1)\,(dd^c \varphi)^n
+ \int_{\{\varphi = \psi = -\infty\} \cap \Omega_{j+1}}
(-w_1)\,(dd^c \varphi)^n.
\end{align*}

Letting $j \to +\infty$ and $\epsilon \to 0$, we conclude that
\begin{align*}
\frac{1}{n!} &\int_{\{\varphi < \psi\}} (\psi - \varphi)^n \,
dd^c w_1 \wedge \cdots \wedge dd^c w_n \\
 + &\int_{\{\varphi < \psi\}} (-w_1)\,(dd^c \psi)^n
+ \int_{\{\varphi = \psi = -\infty\}} (-w_1)\,(dd^c \max(\varphi,\psi))^n \\
\leq &\int_{\{\varphi < \psi\}} (-w_1)\,(dd^c \varphi)^n
+ \int_{\{\varphi = \psi = -\infty\}} (-w_1)\,(dd^c \varphi)^n.
\end{align*}
\end{proof}

\n
{\bf Proof of Theorem \ref{Comparion_Principle2}:}

\n
\begin{proof}
Since $\varphi \preceq_{\mathrm{Cap}} \psi$, we have
\[
\varphi \preceq_{\mathrm{Cap}} \max(\varphi,\psi)
\quad \text{and} \quad
\max(\varphi,\psi) \preceq_{\mathrm{Cap}} \varphi.
\]
It thus follows from Theorem \ref{Comparion_Principle_Pluripolar_Set} that
\[
(dd^c \varphi)^n = (dd^c \max(\varphi,\psi))^n
\quad \text{on } \{\varphi = \psi = -\infty\}.
\]

Repeating the arguments in parts (i) and (ii) of Theorem \ref{Comparion_Principle1}, we obtain
\begin{align*}
\frac{1}{n!} \int_{\{\varphi < \psi\}} (\psi - \varphi)^n \,
dd^c w_1 \wedge \cdots \wedge dd^c w_n
+ \int_{\{\varphi < \psi\}} (-w_1)\,(dd^c \psi)^n
\leq \int_{\{\varphi < \psi\}} (-w_1)\,(dd^c \varphi)^n.
\end{align*}
\end{proof}

\n
{\bf Proof of Theorem \ref{Comparion_Principle}:}

\n
\begin{proof}
(i) We first consider the case where
\[
\varliminf_{z \to \xi} (\varphi(z) - \psi(z)) \geq 0,
\quad \forall \xi \in \partial \Omega.
\]
Fix $\epsilon > 0$ sufficiently small, and choose a domain $D$ such that
\[
\{\varphi < \psi - \epsilon\} \Subset D \Subset \Omega.
\]
Then $\varphi \leq \max(\varphi,\psi-\epsilon)$ on $D$, and
$\varphi = \max(\varphi,\psi-\epsilon)$ on $D \setminus \{\varphi < \psi - \epsilon\}$.
By Part (i) of Theorem \ref{Comparion_Principle2}, we have
\begin{align*}
\frac{1}{n!} \int_{\{\varphi < \psi - \epsilon\}} (\psi - \epsilon - \varphi)^n \,
dd^c w_1 \wedge \cdots \wedge dd^c w_n
+ \int_{\{\varphi < \psi - \epsilon\}} (-w_1)\,(dd^c \psi)^n
\\
\leq \int_{\{\varphi < \psi - \epsilon\}} (-w_1)\,(dd^c \varphi)^n.
\end{align*}
Since $(dd^c \varphi)^n \leq (dd^c \psi)^n$ on $\{\varphi < \psi\}$, it follows that
\[
(dd^c \varphi)^n = (dd^c \psi)^n
\quad \text{on } \{\varphi < \psi - \epsilon\}.
\]
Hence the second term cancels, and we obtain
\[
\int_{\{\varphi < \psi - \epsilon\}} (\psi - \epsilon - \varphi)^n \,
dd^c w_1 \wedge \cdots \wedge dd^c w_n = 0,
\]
for all $w_j \in PSH(\Omega) \cap L^\infty(\Omega)$ with $-1 \leq w_j \leq 0$.
This implies $\varphi \geq \psi - \epsilon$. Letting $\epsilon \to 0$, we obtain $\varphi \geq \psi$, as desired.

\medskip

(ii) We now consider the case where $\varphi \in \mathcal{N}(H)$ and $\psi \leq H$ on $\Omega$.
There exists $u \in \mathcal{N}(\Omega)$ such that
\[
u + H \leq \varphi \leq H.
\]

Let $\{\Omega_j\}_{j \geq 1}$ be a fundamental sequence in $\Omega$, and define
\[
u_j = \sup \{ v \in PSH(\Omega) : v \leq u \ \text{on } \Omega \setminus \overline{\Omega_j} \}.
\]
Then $u_j \searrow u$ on $\Omega$. Since $\psi \leq H$, we have on $\Omega \setminus \overline{\Omega_j}$
\[
\varphi \geq u + H = u_j + H \geq u_j + \psi.
\]

On the set $\{\varphi < \psi + u_j\}$ we have
\[
(dd^c \varphi)^n \leq (dd^c \psi)^n \leq (dd^c(\psi + u_j))^n.
\]
Applying case (i) to $\varphi$ and $\psi + u_j$, we obtain
\[
\varphi \geq \psi + u_j.
\]
Letting $j \to \infty$, we conclude that $\varphi \geq \psi$.
\end{proof}

\n
{\bf Proof of Theorem \ref{Comparion_Principle_Equal_Corollary}:}

\n
\begin{proof}
First, we show that
$$\int_E (dd^c \max (\varphi , \psi ) )^n 
= \int_E (dd^c \varphi )^n 
= \int_E (dd^c \psi )^n,$$
for every pluripolar Borel set $E \subset \Omega$. 
Indeed, by Lemma 4.1 in \cite{ACCH09}, we have
$$\int_E (dd^c \max (\varphi , \psi ) )^n \leq \int_E (dd^c \varphi )^n = \int_E (dd^c \psi )^n,$$
for every pluripolar Borel set $E \subset \Omega$. \\
On the other hand, for each $z \in \Omega$, there exists a neighborhood $U$ of $z$ such that either $\varphi \preceq_{\mathrm{Cap}} \psi$ or $\psi \preceq_{\mathrm{Cap}} \varphi$ on $U$.
If $\varphi \preceq_{\mathrm{Cap}} \psi$ on $U$, then by Theorem \ref{Comparion_Principle_Pluripolar_Set},
$$\int_E (dd^c \max (\varphi , \psi ) )^n \geq \int_E (dd^c \varphi )^n,$$
for every pluripolar Borel set $E \subset U$. 
If $\psi \preceq_{\mathrm{Cap}} \varphi$ on $U$, then again by Theorem \ref{Comparion_Principle_Pluripolar_Set},
$$\int_E (dd^c \max (\varphi , \psi ) )^n \geq \int_E (dd^c \psi )^n,$$
for every pluripolar Borel set $E \subset U$. 
Therefore, in either case,
$$\int_E (dd^c \max (\varphi , \psi ) )^n = \int_E (dd^c \varphi )^n = \int_E (dd^c \psi )^n,$$
for every pluripolar Borel set $E \subset U$. 

By the additivity of measures, this equality extends to every pluripolar Borel set $E \subset \Omega$. 

Finally, repeating the arguments from the proof of Theorem \ref{Comparion_Principle}, we conclude that $\varphi = \psi$.

\end{proof}

\n
To prove Theorem \ref{Characterization}, we need the following lemma:

\begin{lemma}\label{Lemma4}
Let $\varphi, \psi \in \mathcal{F}(\Omega)$ be such that $\varphi \leq \psi$ on $\Omega$. Then the following statements are equivalent:

\begin{enumerate}
\item[(i)] There exists a function $u \in \mathcal{F}^a(\Omega)$ such that
$$\varphi \geq \psi + u \quad \text{on } \Omega.$$

\item[(ii)] $\varphi \preceq_{\mathrm{Cap}} \psi$.

\item[(iii)] For every pluripolar Borel set $E \subset \Omega$,
$$\int_E (dd^c \varphi)^n = \int_E (dd^c \psi)^n.$$
\end{enumerate}
\end{lemma}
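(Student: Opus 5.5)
The plan is to prove (i)$\Rightarrow$(ii)$\Rightarrow$(iii)$\Rightarrow$(i). Only the last implication needs a real construction; the first two follow directly from results already stated.

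\emph{(i)$\Rightarrow$(ii).} If $\varphi\ge\psi+u$, then $\{\varphi<\psi-t\}\subset\{u<-t\}$ for every $t$. Since $u\in\mathcal F^a(\Omega)$, Proposition 3.4 in \cite{CKZ05} gives $t^n\mathrm{Cap}_\Omega(\{u<-t\})\to0$. Monotonicity of capacity then yields $\varphi\preceq_{\mathrm{Cap}}\psi$, even with $D=\Omega$.

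\emph{(ii)$\Rightarrow$(iii).} Since $\varphi\le\psi$, the set $\{\psi<\varphi-t\}$ is empty for $t>0$, so $\psi\preceq_{\mathrm{Cap}}\varphi$ holds trivially. Combined with (ii), the earlier pluripolar comparison lemma (the first Lemma~\ref{Lemma4}, or Theorem~\ref{Comparion_Principle_Pluripolar_Set}) applies in both directions. Hence $\int_E(dd^c\varphi)^n=\int_E(dd^c\psi)^n$ for every pluripolar Borel set $E$.

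\emph{(iii)$\Rightarrow$(i).} This is the main step. Decompose $(dd^c\varphi)^n=\mu_a+\mu_s$, where $\mu_a$ vanishes on pluripolar sets and $\mu_s$ is carried by a pluripolar set. Since $\varphi\in\mathcal F$, $\mu_a$ has finite total mass. By Cegrell's solvability theorem for $\mathcal F^a$ \cite{Ce04}, there is $u\in\mathcal F^a(\Omega)$ with $(dd^cu)^n=\mu_a$. I then claim $\varphi\ge\psi+u$, and I will prove it via Theorem~\ref{Comparion_Principle1}(ii), applied to the pair $(\varphi,\psi+u)$ with $H=0$. This is legitimate because $\varphi\in\mathcal F\subset\mathcal N=\mathcal N(0)$ and $\psi+u\le0$.

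Two measure facts are needed to control the terms of Theorem~\ref{Comparion_Principle1}.
\begin{enumerate}
\item[(a)] $(dd^c(\psi+u))^n\ge(dd^c\psi)^n+(dd^cu)^n$, by expanding the mixed Monge--Amp\`ere product. On pluripolar sets, $\psi+u$ has the same mass as $\psi$: apply (i)$\Rightarrow$(ii)$\Rightarrow$(iii), just proved, to the pair $(\psi+u,\psi)$. Hence $(dd^c(\psi+u))^n\ge\mu_a+1_{\mathrm{pp}}(dd^c\psi)^n=\mu_a+\mu_s=(dd^c\varphi)^n$, where the middle equality uses (iii). So $\int_{\{\varphi<\psi+u\}}(-w_1)(dd^c(\psi+u))^n\ge\int_{\{\varphi<\psi+u\}}(-w_1)(dd^c\varphi)^n$.
\item[(b)] For the terms on $\{\varphi=\psi+u=-\infty\}$: since $\psi+u\le\max(\varphi,\psi+u)\le\psi$, Lemma 4.1 in \cite{ACCH09} squeezes the pluripolar mass of $\max(\varphi,\psi+u)$ between those of $\psi$ and $\psi+u$. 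Both equal that of $\psi$, which by (iii) equals that of $\varphi$. Thus the two $\{\varphi=\psi+u=-\infty\}$ terms in Theorem~\ref{Comparion_Principle1} cancel.
\end{enumerate}

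With (a) and (b), Theorem~\ref{Comparion_Principle1} leaves $\int_{\{\varphi<\psi+u\}}(\psi+u-\varphi)^n\,dd^cw_1\wedge\cdots\wedge dd^cw_n\le0$ for all admissible $w_j$. Taking the $w_j$ equal to a bounded exhaustion of $\Omega$, whose Monge--Amp\`ere measure charges every nonempty open set, this forces $\{\varphi<\psi+u\}$ to have Lebesgue measure zero, hence $\varphi\ge\psi+u$ everywhere.

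The main obstacle I expect is step (a), namely justifying the superadditivity $(dd^c(\psi+u))^n\ge(dd^c\psi)^n+(dd^cu)^n$ rigorously in $\mathcal F$. I would do this by approximating $\psi$ and $u$ by decreasing sequences in $\mathcal E_0$ and using that mixed terms are positive. I would also need to confirm that the pluripolar part of $(dd^c\psi)^n$ is exactly $\mu_s$, which is where hypothesis (iii) is used.
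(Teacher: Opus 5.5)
Your proof is correct. Steps (i)$\Rightarrow$(ii) and (ii)$\Rightarrow$(iii) are the paper's argument; for (iii)$\Rightarrow$(i) you take a genuinely different route.

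\textbf{The paper's route.} It invokes Corollary 4.15 of \cite{ACCH09} to split both functions, giving $\varphi_1+\varphi_2\le\varphi\le\psi\le\psi_1$. Here $\varphi_1,\psi_1$ carry the pluripolar parts and $\varphi_2\in\mathcal{F}^a$ carries the rest. Hypothesis (iii) gives $(dd^c\varphi_1)^n=(dd^c\psi_1)^n$, and the inequality $\psi_1\ge\varphi_1+\varphi_2$ gives $\psi_1\preceq_{\mathrm{Cap}}\varphi_1$. The uniqueness Theorem~\ref{Comparion_Principle_Equal_Corollary} then forces $\varphi_1=\psi_1$, hence $\varphi\ge\psi_1+\varphi_2\ge\psi+\varphi_2$.

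\textbf{Your route.} You decompose only the measure $(dd^c\varphi)^n$ and solve for its non-pluripolar part with Cegrell's theorem. You then obtain $\varphi\ge\psi+u$ from one direct application of Theorem~\ref{Comparion_Principle1}(ii) with $H=0$. Hypothesis (iii) is used twice: first for the global domination $(dd^c(\psi+u))^n\ge(dd^c\varphi)^n$, and second, via the squeeze $\psi+u\le\max(\varphi,\psi+u)\le\psi$, to cancel the two pluripolar error terms. All terms are finite because everything lies in $\mathcal{F}$.

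\textbf{Comparison.} The witness is the same in both proofs. Since the pluripolar part of $(dd^c\varphi)^n$ is carried by $\{\varphi=-\infty\}$, uniqueness in $\mathcal{F}^a$ shows your $u$ is the paper's $\varphi_2$. The paper's route is shorter: it imports the nontrivial inequality $\varphi\ge\varphi_1+\varphi_2$ and hides the comparison argument inside the uniqueness theorem. Your route has fewer dependencies. It avoids decomposing $\psi$ at the level of functions, and it skips the detour through Theorem~\ref{Comparion_Principle_Equal_Corollary}, which itself rests on Theorem~\ref{Comparion_Principle1}. It also shows exactly where (iii) enters.

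\textbf{Two small remarks.}
\begin{enumerate}
\item The superadditivity you flag as the main obstacle is immediate from multilinearity and positivity of mixed Monge--Amp\`ere measures on $\mathcal{E}$ \cite{Ce04}; no separate approximation is needed.
\item In the last step, an arbitrary bounded exhaustion need not have a Monge--Amp\`ere measure charging every open set. Take instead $w_1=\cdots=w_n=\delta(|z|^2-R^2)$ with $\Omega\subset\{|z|<R\}$ and $\delta R^2\le 1$; its Monge--Amp\`ere measure is a positive multiple of Lebesgue measure. Then pass from almost everywhere to everywhere by the sub-mean value property.
\end{enumerate}
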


\begin{proof}
(i) $\Rightarrow$ (ii) By Proposition 3.4 in \cite{CKZ05}, we obtain
$$\varlimsup_{t \to +\infty} t^n \,\mathrm{Cap}_\Omega\bigl(\{\varphi < \psi - t\}\bigr)\leq\lim_{t \to +\infty} t^n \,\mathrm{Cap}_\Omega\bigl(\{u < -t\}\bigr)= 0.$$

\n
(ii) $\Rightarrow$ (iii) The assertion follows from Theorem \ref{Comparion_Principle_Pluripolar_Set} together with the fact that $\psi \preceq_{\mathrm{Cap}} \varphi$ on $\Omega$.

\n
(iii) $\Rightarrow$ (i) By Corollary 4.15 in \cite{ACCH09}, there exist $\varphi_1, \varphi_2 \in \mathcal{F}(\Omega)$ and $\psi_1, \psi_2 \in \mathcal{F}(\Omega)$ such that
$$(dd^c \varphi_1)^n = \mathbf{1}_{\{\varphi = -\infty\}} (dd^c \varphi)^n,\ (dd^c \varphi_2)^n = \mathbf{1}_{\{\varphi > -\infty\}} (dd^c \varphi)^n,$$
$$(dd^c \psi_1)^n = \mathbf{1}_{\{\psi = -\infty\}} (dd^c \psi)^n,\ (dd^c \psi_2)^n = \mathbf{1}_{\{\psi > -\infty\}} (dd^c \psi)^n,$$
and
\[
\varphi_1 \geq \varphi,\quad \varphi_2 \geq \varphi,\quad \varphi \geq \varphi_1 + \varphi_2,
\]
\[
\psi_1 \geq \psi,\quad \psi_2 \geq \psi,\quad \psi \geq \psi_1 + \psi_2.
\]
We also have
\[
\varphi_1 + \varphi_2 \leq \varphi \leq \psi\leq\psi_1.
\]

Moreover, since $\varphi_2 \in \mathcal{F}^a(\Omega)$ and by the proof of (i) $\Rightarrow$ (ii), we obtain $\psi_1 \preceq_{\mathrm{Cap}} \varphi_1$. On the other hand,
\[
(dd^c \varphi_1)^n = (dd^c \psi_1)^n.
\]
By Theorem \ref{Comparion_Principle_Equal_Corollary}, it follows that $\varphi_1 = \psi_1$. Therefore,
\[
\varphi \geq \varphi_1 + \varphi_2
= \psi_1 + \varphi_2
\geq \psi + \varphi_2.
\]
\end{proof}

\n
{\bf Proof of Theorem \ref{Characterization}:}

\begin{proof}

\n
(i) $\Rightarrow$ (ii) For a domain $D \Subset \Omega$, we set
$$u_D = \sup \{ v \in PSH^-(\Omega) : v \leq u \text{ on } D \} \in \mathcal{F}(\Omega).$$
Since $u_D \geq u$ on $\Omega$ and $u \in \mathcal{E}^a(\Omega)$, it follows from Lemma 4.1 in \cite{ACCH09} that $u_D \in \mathcal{F}^a(\Omega)$. Moreover, since $u_D = u$ on $D$, by Proposition 3.4 in \cite{CKZ05} we obtain
$$\varlimsup_{t \to +\infty} t^n \,\mathrm{Cap}_\Omega\bigl(\{\varphi < \psi - t\} \cap D\bigr)\leq\lim_{t \to +\infty} t^n \,\mathrm{Cap}_\Omega\bigl(\{u_D < -t\}\bigr)= 0.$$

\n
(ii) $\Rightarrow$ (i) We set
\[
u(\varphi, \psi) = \sup \{ v \in PSH^-(\Omega) : v \leq \varphi - \psi \} \in PSH^-(\Omega).
\]
We have $$u(\varphi, \psi) \geq \varphi \text{ and }\varphi \geq \psi + u(\varphi, \psi).$$ It suffices to prove $u(\varphi, \psi) \in \mathcal{E}^a(\Omega)$. Indeed, let $W \Subset D \Subset \Omega$. We define
$$\varphi_D = \sup \{ v \in PSH^-(\Omega) : v \leq \varphi \text{ on } D \} \in \mathcal{F}(\Omega),$$
and
$$\varphi_{\Omega \setminus \overline{W}} = \sup \{ v \in PSH^-(\Omega) : v \leq \varphi \text{ on } \Omega \setminus \overline{W} \} \in \mathcal{E}(\Omega).$$
We have
$$\varphi_D \ge \varphi \text{ on } \Omega,\ \varphi_D = \varphi \text{ on } D,\text{ and }
\mathrm{supp}\,(dd^c \varphi_D)^n \subset \overline{D}.$$
Moreover,
$$\varphi_{\Omega \setminus \overline{W}} \ge \varphi \text{ on } \Omega,\
\varphi_{\Omega \setminus \overline{W}} = \varphi \text{ on } \Omega \setminus \overline{W},\text{ and }\mathrm{supp}\,(dd^c \varphi_{\Omega \setminus \overline{W}})^n \subset \Omega \setminus {W}.$$
Since $\varphi_D \preceq_{\mathrm{Cap}} \varphi \preceq_{\mathrm{Cap}} \psi$, we obtain $\varphi_D \preceq_{\mathrm{Cap}} \max(\varphi_D, \psi)$. By the implication (i) $\Rightarrow$ (ii), there exists a function $u_D \in \mathcal{F}^a(\Omega)$ such that
$$\varphi_D \ge \max(\varphi_D, \psi) + u_D \geq \psi + u_D.$$
In particular,
$$\varphi \geq \varphi_D + \varphi_{\Omega \setminus \overline{W}} \geq \psi + u_D + \varphi_{\Omega \setminus \overline{W}}.$$
Hence, $$u(\varphi, \psi) \geq u_D + \varphi_{\Omega \setminus \overline{W}}.$$
By Lemma 4.1 and Lemma 4.4 in \cite{ACCH09}, we obtain
$$\int_E (dd^c u(\varphi, \psi))^n\leq\int_E (dd^c (u_D + \varphi_{\Omega \setminus \overline{W}}))^n = 0,$$
for every pluripolar Borel set $E \subset W$. Letting $W \nearrow \Omega$, we conclude that this equality extends to every pluripolar Borel set $E \subset \Omega$. Therefore, we conclude that $u(\varphi, \psi) \in \mathcal{E}^a(\Omega),$ as desired.
\end{proof}
 
\noindent \textbf{Acknowledgements} This work was completed while the first author was visiting the Department of Mathematics at the National University of Singapore as a Visiting Senior Research Fellow under the Singapore Academies Southeast Asia Fellowship (SASEAF) Programme, within the project “Math, Sobolev Space” (Grant No. E-146-00-0039-01). The first author would like to express his sincere gratitude to the Department of Mathematics at NUS for its warm hospitality and excellent working environment. He also gratefully acknowledges the Singapore National Academy of Science (SNAS) and the National Research Foundation (NRF), Singapore, for their financial support through the SASEAF Programme.
%We would like to thank the anonymous referee for many suggestions that greatly improved the paper.


\begin{thebibliography}{0000}
\bibitem[A07]{A07} P. \AA hag,A Dirichlet problem for the complex Monge–Ampère operator in $\mathcal{F}(f)$, Michigan Math. J. \textbf{55}, 123–138 (2007).
\bibitem[ACCP09]{ACCH09} P. \AA hag, U. Cegrell, R. Czyz, and H.H. Pham, Monge-Amp\`ere measures on pluripolar sets, J. Math. Pures Appl. {\bf 92}, 613--627 (2009).

\bibitem[ACLR25]{ACLR25} P. \AA hag, R. Czyz, H.C. Lu H, and A. Rashkovskii, Geodesic connectivity and rooftop envelopes in the Cegrell classes, Math. Ann. {\bf 391}, 3333--3361 (2025).

\bibitem[BT76]{BT76} E. Bedford and B.A. Taylor, The Dirichlet problem for the complex Monge-Amp\`ere operator, Invent. Math. {\bf 37}, 1--44 (1976).

\bibitem[BT82]{BT82} E. Bedford and B.A. Taylor, A new capacity for plurisubharmonic functions, Acta Math. {\bf 149}, 1--40 (1982).

\bibitem[BT87]{BT87} E. Bedford and B. A. Taylor, Fine topology, Silov boundary, and $(dd^c)^n$, J. Funct. Anal. {\bf 72}, 225--251 (1987).
\bibitem[Be06]{Be06} S. Benelkourchi, A note on the approximation of plurisubharmonic functions, C. R.
Math. Acad. Sci. Paris \textbf{342}, 647--650 (2006).
\bibitem[B06]{B06} Z. B\l ocki, The domain of definition of the complex Monge–Amp\`ere operator, Am. J. Math. \textbf{128}, 519--530 (2006).
\bibitem[C98]{Ce98} U. Cegrell, Pluricomplex energy, Acta Math. {\bf 180}, 187--217 (1998).

\bibitem[C04]{Ce04} U. Cegrell, The general definition of the complex Monge-Amp\`{e}re operator, Ann. Inst. Fourier (Grenoble) {\bf 54}, 159--179 (2004).
\bibitem[C08]{Ce08} U. Cegrell, A general Dirichlet problem for the complex Monge–Amp\`{e}re operator, Ann. Polon. Math. \textbf{94}, 131--147 (2008).
\bibitem[CKZ05]{CKZ05} U. Cegrell, S. Kolodziej and A. Zeriahi, Subextension of plurisubharmonic functions with weak singularities, Math. Z. {\bf 250}, 7--22 (2005).

\bibitem[D93]{De93} J.-P. Demailly, Monge-Amp\`ere operators, Lelong numbers and intersection theory, in Complex Analysis and Geometry, Univ. Ser. Math., pp. 115–193. Plenum, New York, 1993. 
\bibitem[DD20]{DD20} H-.S. Do and T.D. Do, Some remarks on the Cegrell's class $\mathcal {F} $, Ann. Polon. Math. \textbf{125}, 13--24 (2020).
\bibitem[NP09]{KH09} V.K. Nguyen and H.H. Pham, A comparision principle for the complex Monge-Amp\`ere operator in Cegrell’s classes and applications, Trans. Am. Math. Soc. {\bf 361}, 5539--5554 (2009).

\end{thebibliography}
\end{document}